\numberwithin{equation}{section}
\newtheorem{theorem}{Theorem}[section]
\newtheorem{cor}[theorem]{Corollary}
\newtheorem{lemma}[theorem]{Lemma}
\newtheorem{prop}[theorem]{Proposition} \theoremstyle{definition}
\newtheorem{definition}[theorem]{Definition}
 \theoremstyle{remark}
\newtheorem{rem}[theorem]{Remark}
\newtheorem{exam}[theorem]{Example}
\newtheorem{lem}[theorem]{Lemma}
\DeclareMathOperator{\ran}{im} \DeclareMathOperator{\sign}{sign}
\DeclareMathOperator{\cd}{cd}
\DeclareMathOperator{\cfd}{cfd}
\newcommand{\be}{\begin{equation}}
\newcommand{\ee}{\end{equation}}
\newcommand{\dd}[2]{\genfrac{}{}{0pt}{}{#1}{#2}}
\newcommand{\ep}{\epsilon}
\def\beq{\begin{equation} } \def\eeq{\end{equation}}
 \def\qed{$\blacksquare$}
    \def\cE{ {\mathcal E} }
\def\cM{{\mathcal M}} \def\cN{ {\mathcal N} }
\def\cW{{\mathcal W}}
\def\cN{{\mathcal N}}
\def\tA{\tilde A}  
\def\M{{\rm red}}
\def\RR{\mathbb R}
\def\eps{\varepsilon}  
\def\ben{\begin{enumerate} }
\def\een{\end{enumerate} }
\begin{document}

\title{Determinant Expansions of Signed Matrices and of Certain
Jacobians}

\author{J. William Helton}
\address{Mathematics Department,
University of California at San Diego, La Jolla CA 92093-0112}
\email{helton@ucsd.edu}
\thanks{Partially supported by the NSF and the Ford Motor Co.}

\author{Igor Klep}
\address{Univerza v Ljubljani, IMFM,
Jadranska 19, SI-1111 Ljubljana, Slovenia}
\email{igor.klep@fmf.uni-lj.si}
\thanks{The author acknowledges the financial support from the state
budget by the Slovenian Research Agency (project No. Z1-9570-0101-06).}

\author{Raul Gomez}
\address{Mathematics Department,
University of California at San Diego, La Jolla CA 92093-0112}
\email{r1gomez@math.ucsd.edu}

\subjclass[2000]{15A48, 80A30}
\keywords{sign patterns, signed matrices, determinants of Jacobians, chemical reaction networks}

\begin{abstract}
 This paper treats two topics: matrices with sign patterns
  and Jacobians of certain mappings. The main topic is
  counting the number of plus and minus coefficients in the determinant
  expansion of sign patterns and of these Jacobians.
The paper is motivated
 by an approach to chemical networks initiated by
Craciun and Feinberg.
We also give a graph-theoretic test for determining when
the Jacobian of a chemical reaction dynamics
has a sign pattern.
\end{abstract}

\maketitle

\section{Introduction}

\label{sec:Intro}

 This paper treats two topics: matrices with sign patterns
  and  Jacobians of certain mappings. The main topic is
  counting the number of plus and minus coefficients in their determinant
  expansion,  but other types of results occur along the way.
 It  is motivated
 by an approach to chemical networks initiated by
Craciun and Feinberg, see
\cite{CF05, CF06}, and extensions observed in \cite{CHWprept}.

\subsection{Determinants of Sign Patterns}
\label{subsec:Det}

The first topic, see \S \ref{sec:Nonsquare},
is purely matrix theoretic and generalizes
the classical theory
of sign definite matrices \cite{BS95}.
This subject considers classes of  matrices having a fixed sign pattern
(two matrices are in a given class iff each of their entries
has the same sign (or is 0)), then one studies determinants.
Call a {\bf sign pattern} a matrix $A$ with entries which are $\pm
A_{ij}$ or $0$, where $A_{ij}$ are free variables.
To a matrix $B$ we can associate its sign pattern $A={\rm SP}(B)$
with $\pm A_{ij}$ or $0$ in the correct locations.
If $A$ is square, then the determinant of $A$
is a polynomial in variables $A_{ij}$, which we call the
{\bf determinant expansion} of $A$.
We call a square invertible
matrix {\bf sign-nonsingular
(SNS)} if every term in the determinant expansion of its sign
pattern has the same
sign. There is a complete and satisfying
theory of these which associates a digraph to a square sign pattern
and a test which determines precisely if the matrix is SNS,
see \cite{BS95}.

In this paper we analyze square sign patterns and give
a graph-theoretic test to count the number
of positive and negative
signs in their determinant expansions;
Theorem \ref{thm:count}.
We extend the result
to nonsquare matrices and call our test on a matrix the
{\bf det sign test}.

\subsection{Jacobians of reaction form differential equations}
\label{subsec:Jac}

The second topic, \S \ref{sec:Jac},
in this paper applies this to systems of
ordinary differential equations which act on the
nonnegative orthant $\RR_{\geq 0}^d$ in $\mathbb R^d$:
\beq
\label{eq:de}\frac{dx}{dt}=   f(x),
\eeq
where $f:\RR_{\geq 0}^d\to\RR^d$.
The differential equations we
address are of a special form found in chemical
reaction kinetics:
\beq\label{eq:chemde}
\frac{dx}{dt}=Sv(x),
\eeq
where $S$ is a real $d\times d'$ matrix and $v$ is a column
vector consisting of $d'$ real-valued functions.
We say that system \eqref{eq:de} has {\bf reaction form} provided
it is represented as in \eqref{eq:chemde} with $v(x)=(v_1,\ldots,v_{d'})$
and
\begin{equation}
\label{def:vdepends}
v_j \text{ depends exactly on variables } x_i \text{ for which }
S_{ij}<0.
\end{equation}
Call $S$ the {\bf stoichiometric matrix} and the entries of $v(x)$
the {\bf fluxes}. We always assume the fluxes are continuously
differentiable.

Our second main result,
Theorem \ref{prop:JacSigns}, describes which $S$ have the property
that the Jacobian matrix $f'(x)= Sv'(x)$ has a sign pattern,
meaning that each entry $f'_{ij}(x)$ has sign independent of
$x$ in the orthant.
The characterization is graph-theoretic, clean and elegant.
The question was motivated
by works of Sontag and collaborators
\cite{AnS03,AS06,AS07}.

Our third main result here, Theorem \ref{thm:1cycle},
when specialized to square invertible $S$
counts the number of plus and minus signs in
the determinant expansion of
the Jacobian $f'(x)=Sv'(x)$ of a reaction form $f(x)=Sv(x)$
in the terms of a bipartite graph associated to $S$ and the
det sign test.
We use this to obtain results on the determinant expansion
for general nonsquare $S$.

We present many examples which illustrate features of our results
and limitations on how far one can go beyond them.

\subsection{Chemistry}
\label{subsec:Chem}

The reaction form differential equations
subsume chemical reactions where no
chemical appears on both sides of a reaction,
e.g.~catalysts.
Furthermore, in many situations
all fluxes $v_j(x) $ are monotone nondecreasing
in each $x_i$ when the other variables are fixed, that is,
$v'(x)$ has all entries nonnegative.
This happens in classical mass action kinetics
or for Michaelis-Menten-Hill type fluxes.
See \cite{Pa06} for an exposition.

A key issue with reaction form equations is how many equilibria
do they have in the strict positive orthant $\RR_{> 0}^d$.
It was observed in \cite{CF05,CF06,CF06iee}
that in many simple chemical reactions the
determinant of $f'$ has constant sign on the positive orthant
and as a consequence of a strong version
of this, any equilibrium which exists  is unique.
Other approaches exploiting this
determinant  hypothesis (under weaker assumptions)
are in \cite{BDB07,CHWprept}.
Roughly speaking,
if the determinant of the Jacobian $f'$ does not change
sign on a compact region $\Omega$,
then degree theory applies and bears effectively on this issue;
the full orthant $\RR_{>0}^d$ can easily be approximated by expanding
$\Omega$'s.

The degree argument is very flexible
and probably extends to many situations.
Fragile, however,
are establishing constraints on the sign of the determinant.
A key tool is
the {\bf determinant expansion} of \eqref{eq:chemde}, namely,
the expression $\det(S V(x))$ as a polynomial in the functions
$V_{ij}(x)$, which are the entries of the matrix function
$V(x)=v'(x)$.
The main issue is the sign of the terms in the determinant
expansion, are they all the same or if not are there few
``anomalous'' signs.
In \cite{CHWprept}
it is observed that in each example of Craciun and Feinberg
the determinant expansion has very few anomalous signs.
When this happens, then it gave some methods one could use
to prove existence and uniqueness of equilibria.
For example, if the determinant expansion has one minus sign
and many plus signs, and if the monotonicity condition
$V_{ij}(x) \geq 0$ holds, then $\det(S V(x))$ is positive on large
regions (which in particular situations can be estimated).

Our main results, Theorem \ref{thm:1cycle} etc., on $\det(S V(x))$
were motivated by a desire to develop
tools for  counting anomalous signs.
While the paper is not aimed at chemical applications, many
of the examples of matrices $S$ we use to illustrate our work
are stoichiometric matrices for chemical reactions.

The paper \cite{BDB07} identified chemical reaction
determinant expansions initiated by \cite{CF05,CF06}
with classical matrix determinant expansion theory
and sign patterns.
This is described in the book \cite{BS95} and pursued
into new directions in a variety of recent papers
such as \cite{BJS98,CJ06,KOSD07}.
The bipartite graph conventions in this paper are a bit
different than conventional, but were chosen to
be reasonably consistent with \cite{CF06}.

The authors wish to thank Vitaly Katsnelson for
diligent reading and suggestions.

\section{Matrices with sign patterns}
\label{sec:Nonsquare}

This section gives the set-up and our main results
on sign patterns as described in the introduction.

Let $t(A)$, respectively $m_\pm(A)$, denote the number of terms,
respectively $\pm$ signs, 
in the determinant expansion of the
square sign pattern $A$.
Recall {\bf a sign definite (SD)} matrix $A$ is one
with either $m_-(A)=0$ or $m_+(A)=0$ or $\det(A)=0$.
The number
of {\bf anomalous signs} $m(A)$ of a square sign pattern $A$ is
defined to be
$$
m(A):=\min\{m_-(A),m_+(A)\}.
$$
We say $A$ is {\bf $j$-sign definite} if it
has $j$ anomalous signs, that is $m(A)=j$.

\begin{lemma} \label{it:mpmn}\label{it:flipsign}
$m(A)=m(\tA)$ and
$m_+(A)=
m_-(\tA)$ if $\tA$ is obtained from $A$ by:\\ interchange of two
rows, or interchange of two columns, or multiplying a row by a
minus sign, or multiplying a column by a minus sign.  \end{lemma}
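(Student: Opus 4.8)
The plan is to reduce everything to a single clean fact: in the determinant expansion of a sign pattern the terms never cancel, so counting $+$ and $-$ terms is the same as counting nonzero diagonals weighted by sign. Writing the Leibniz expansion
$$\det A=\sum_{\sigma\in S_n}\sign(\sigma)\prod_{i=1}^n A_{i\sigma(i)},$$
I would first observe that distinct permutations $\sigma$ pick out distinct sets of positions, hence distinct monomials in the free variables $A_{ij}$; therefore no two terms can combine or cancel. Let $D(A)$ be the set of nonzero diagonals, i.e.\ those $\sigma$ for which every entry $A_{i\sigma(i)}$ is nonzero, and for $\sigma\in D(A)$ let $s_{ij}\in\{\pm1\}$ be the sign of the $(i,j)$ entry and set $\epsilon_A(\sigma):=\sign(\sigma)\prod_i s_{i\sigma(i)}$. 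Then $t(A)=|D(A)|$ and $m_\pm(A)=\#\{\sigma\in D(A):\epsilon_A(\sigma)=\pm1\}$, so it suffices to produce, for each of the four operations, a bijection $D(A)\to D(\tA)$ that reverses $\epsilon$.

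For multiplying a row (respectively a column) by $-1$, the zero pattern is unchanged, so $D(\tA)=D(A)$ and I take the identity bijection; along any diagonal exactly one factor $s_{i\sigma(i)}$ changes sign — the entry in the scaled row, or the unique entry in the scaled column — so $\epsilon_{\tA}(\sigma)=-\epsilon_A(\sigma)$. For interchanging rows $p$ and $q$, let $\tau=(p\,q)$, so that $\tA_{ij}=A_{\tau(i)j}$; here I take $\Phi(\sigma)=\sigma\tau$. Reindexing the product by $k=\tau(i)$ shows $\Phi$ maps $D(\tA)$ bijectively onto $D(A)$ and that $\prod_i s_{\tau(i)\sigma(i)}=\prod_k s_{k,\sigma\tau(k)}$, i.e.\ the product of entry-signs is the same diagonal product; since $\sign(\sigma)=-\sign(\sigma\tau)$ this gives $\epsilon_{\tA}(\sigma)=-\epsilon_A(\Phi(\sigma))$. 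Interchange of columns is the transpose of this case.

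Given such a sign-reversing bijection, the conclusion is immediate: $m_+(\tA)=m_-(A)$ and $m_-(\tA)=m_+(A)$ (which is the asserted identity $m_+(A)=m_-(\tA)$), while $t(\tA)=t(A)$ and hence $m(\tA)=\min\{m_+(\tA),m_-(\tA)\}=\min\{m_-(A),m_+(A)\}=m(A)$. I expect the only delicate point to be the bookkeeping in the interchange cases, namely correctly tracking how the free variables are relabeled when rows or columns move and verifying that composition with $\tau$ genuinely restricts to a bijection of nonzero diagonals that flips $\sign(\sigma)$ while leaving the entry-sign product untouched; the scaling cases and the no-cancellation observation are essentially immediate once the diagonal bookkeeping above is in place.
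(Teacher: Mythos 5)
Your proof is correct: the sign-reversing bijections on nonzero diagonals are sound (exactly one entry of any diagonal lies in a given row or column, so scaling flips $\epsilon$; composition with the transposition $\tau$ flips $\sign(\sigma)$ while preserving the entry-sign product), and the no-cancellation observation justifies counting terms by diagonals. The paper dismisses this lemma with ``Obvious,'' and your argument is precisely the standard Leibniz-expansion bookkeeping that remark presupposes, so there is nothing to compare beyond noting that you have written out the intended proof in full.
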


\begin{proof} Obvious. \end{proof}

{\bf Question (J-sign)}:
{\it Given a sign pattern $S$ we are interested in whether every
square submatrix is sign definite or more generally in getting an upper bound
$J$ on the $j$ for which $S$ contains a $j$-sign definite square matrix.}

We shall settle this question and give an even more refined
result for square sign patterns $A$ which counts $m_{\pm}(A)$.

\subsection{Basics on graphs, matrices and determinants}
\label{subsec:Graphs}

To this end we revert to graphs. Given a sign pattern $S$
let
$G(S)$ denote its {\bf signed bipartite graph}.
This is a simplified version of the species-reaction (SR) graph in
\cite{CF06}.
It is a signed bipartite graph with one set of vertices $C(S)$
based on columns and the other set of vertices $R(S)$ based on
rows. There is an edge joining column $c$ and row $r$ iff the
$(r,c)$ entry $S_{rc}$ of $S$ is nonzero.  The sign of this edge
is the sign of $S_{rc}$. If two edges meeting at the same column
have the same sign, they are called a {\bf c-pair}. By a
{\bf cycle} we mean
a closed (simple) path, with no other repeated vertices than the starting and ending vertices (sometimes also called a simple cycle, circuit, circle, or polygon).
A cycle that
contains an even (respectively odd) number of c-pairs is called an
{\bf e-cycle} (respectively {\bf o-cycle}). Recall a {\bf
matching} in a bipartite graph is a set of edges without common
vertices. Equivalently it is an injective mapping from one of the
vertex sets to the other.  A matching is called {\bf perfect} if
it covers all vertices in the smaller of the two vertex sets.  A
$k\times k$ square submatrix $A$ of $S$ corresponds to $k$ column
nodes $C(A)$ and $k$ row nodes $R(A)$; there is an associated
sub-bipartite graph $G(A)$ of $G(S)$.

\begin{exam} The following is an example taken from
\cite[Table 1.1.(v)]{CF05} which illustrates these definitions.
Given
$$
S=\left[
\begin{array}{rrrr}
 -1 & -1 & 0 & 0 \\
 -1 & 0 & 1 & 0 \\
 0 & -1 & -1 & -1 \\
 0 & 0 & -1 & 1 \\
 0 & 0 & 0 & -1 \\
 1 & 0 & 0 & 0 \\
 0 & 1 & 0 & 0
\end{array}
\right],
$$
the signed bipartite graph $G(S)$ is
as follows:
  \[ \xymatrix{
& \begin{xy} *+{ \txt{R2} }* \frm{o} \end{xy} \ar@{{}-{}}[d]
\ar@{{}--{}}[rr] & & \fbox{C3} \ar@{{}-{}}[dd] \ar@{{}-{}}[r] &
\begin{xy} *+{ \txt{R4} }* \frm{o} \end{xy} \ar@{{}--{}}[dd] \\
\begin{xy} *+{ \txt{R6} }* \frm{o} \end{xy} \ar@{{}--{}}[r] &
\fbox{C1} \ar@{{}-{}}[d] \\ & \begin{xy} *+{ \txt{R1} }*
\frm{o} \end{xy} \ar@{{}-{}}[r] & \fbox{C2} \ar@{{}-{}}[r]
\ar@{{}--{}}[d] & \begin{xy} *+{ \txt{R3} }* \frm{o} \end{xy}
\ar@{{}-{}}[r] & \fbox{C4} \ar@{{}-{}}[r] & \begin{xy} *+{
\txt{R5} }* \frm{o} \end{xy} \\ & & \begin{xy} *+{ \txt{R7} }*
\frm{o} \end{xy} } \] 

\noindent Here the dashed lines denote positive edges and full lines
represent negative edges.

The edges C3$-$R4 and C3$-$R3 are a c-pair, while C3$-$R4 and
C3$-$R2 are not a c-pair.  The cycle C3$-$R4$-$C4$-$R3$-$C3 has one
c-pair, so is an o-cycle. On the other hand, the cycle
C1$-$R1$-$C2$-$R3$-$C3$-$R2$-$C1 has two c-pairs and so is an
e-cycle.  \qed \end{exam}

We use repeatedly the basic fact of linear algebra that if $A$ is
an $n \times n$ square matrix, then \beq \label{eq:detdef} \det A
= \sum_{\sigma \in S_n} \text{sign} (\sigma)
\prod_{i=1}^{n}A_{i,\sigma(i)}, \eeq where $S_n$ is the group of
permutations on $\{1, 2, \cdots, n \}$ and $A_{i,j}$ denotes the
$(i,j)$ term of $A$).

\begin{lemma}\label{lem:crucial} The bipartite graph $G(A)$ of a
square sign pattern $A$ has no perfect matching iff $\det A=0$.
\end{lemma}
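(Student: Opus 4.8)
The plan is to read off both directions directly from the determinant expansion \eqref{eq:detdef}, treating $\det A$ as a polynomial in the free variables $A_{ij}$ rather than as a number. The key observation is that a single term $\sign(\sigma)\prod_{i=1}^{n}A_{i,\sigma(i)}$ of \eqref{eq:detdef} is a nonzero monomial exactly when every factor $A_{i,\sigma(i)}$ is nonzero, and by the definition of $G(A)$ this happens precisely when each pair $(i,\sigma(i))$ is an edge of $G(A)$. Since $\sigma$ is a bijection of $\{1,\dots,n\}$ and $A$ is $n\times n$, such a $\sigma$ carries the same data as a perfect matching of $G(A)$, namely a set of $n$ edges meeting every row vertex and every column vertex. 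So the first step is to record this bijection between permutations $\sigma\in S_n$ contributing a nonzero monomial and perfect matchings of $G(A)$.

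The forward direction is then immediate. If $G(A)$ has no perfect matching, then for every $\sigma\in S_n$ at least one factor $A_{i,\sigma(i)}$ is $0$, so every term of \eqref{eq:detdef} vanishes and $\det A=0$ identically.

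For the converse I would argue that the surviving terms cannot cancel. Each entry of the sign pattern is $\pm A_{ij}$ or $0$ with the $A_{ij}$ distinct (algebraically independent) free variables, so the monomial $\prod_{i}A_{i,\sigma(i)}$ attached to a matching $\sigma$ records exactly which variable occupies each row and therefore determines $\sigma$ uniquely; hence distinct perfect matchings yield distinct monomials, which cannot cancel against one another. Consequently, if $G(A)$ has at least one perfect matching, the associated monomial occurs in $\det A$ with coefficient $\pm 1\neq 0$, so $\det A\neq 0$ as a polynomial.

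The only point requiring care — and the one place the sign-pattern hypothesis is genuinely used — is this no-cancellation step: for a matrix of \emph{numbers}, the presence of a nonzero term in \eqref{eq:detdef} does not force $\det A\neq 0$, and it is exactly the freeness of the variables $A_{ij}$ that rules out accidental cancellation. Everything else is a direct bookkeeping translation between permutations and matchings, so I expect no serious obstacle beyond stating the algebraic independence cleanly.
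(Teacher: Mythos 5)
Your proof is correct and follows essentially the same route as the paper: both read the equivalence off the determinant expansion \eqref{eq:detdef}, identifying permutations with nonvanishing terms with perfect matchings of $G(A)$. You merely make explicit the no-cancellation step (distinct matchings give distinct monomials in the free variables $A_{ij}$) that the paper's proof leaves implicit, which is a sound clarification rather than a different argument.
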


\begin{proof}  $\det A\ne 0$ iff the determinant expansion will have at
least one nonzero term, say $A_{1,\sigma(1)}\cdots
A_{n,\sigma(n)}$ for some $\sigma\in S_{n}$. So all the
$A_{i,\sigma(i)}$ are nonzero, hence row $1$ is connected to
column $\sigma(1)$, $\ldots$, row $n$ is connected to column
$\sigma(n)$ and this yields a perfect matching for $G(A)$.
\end{proof}

\begin{rem}
\label{rem:perfMatch}
 The same argument shows that: \ben \item If the
bipartite graph of a rectangular matrix does not have a perfect
matching, then the determinants of all of its maximal square
submatrices are $0$.  \item The number of terms in the determinant
expansion of a square sign pattern $A$ is the number of perfect
matchings of $G(A)$.  \qed\een \end{rem}

\begin{rem} Note the following:
\ben \item Without loss of
generality for (J-sign) we can remove the second, third etc.~or any
colinear column from $S$.  Thus there are no colinear columns in
$S$.  This is true because, (a) if $A$ contains linearly dependent
columns then $\det A$ is 0.  (b) if $\tA$ is the same as $A$
except one column is removed and replaced (in any order) by a
scalar multiple of that column then the only possible change in
$m_\pm$ is $m_\pm(A) =  m_{\mp}(\tA)$.  \item Any cycle in $G(S)$
can be embedded in a square submatrix of $S$.  \qed\een \end{rem}

\subsection{SNS matrices vs.~e-cycles}
\label{subsec:ecycles}

Let us call a square invertible matrix {\bf sign-nonsingular
(SNS)} if every term in the determinant expansion of its sign
pattern has the same
sign \cite[Lemma 1.2.4]{BS95}. If all square submatrices of (a not
necessarily square matrix) S are either SNS or singular, then $S$
is {\bf strongly sign-determined (SSD)}.

\begin{prop}\label{prop:ssd}
A sign pattern $S$ is SSD iff the
signed
bipartite graph $G(S)$ has no e-cycle.  \end{prop}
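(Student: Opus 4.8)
The plan is to reduce both implications to a single sign-comparison computation between two terms of a determinant expansion, carried out at the level of square submatrices $A$ of $S$. By \eqref{eq:detdef} each nonzero term of $\det A$ corresponds to a perfect matching of $G(A)$ (Lemma \ref{lem:crucial} and Remark \ref{rem:perfMatch}), so I would work throughout with perfect matchings rather than with permutations, and use that $G(A)\subseteq G(S)$ for any square submatrix $A$.

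The technical core, and the step I expect to be the main obstacle, is the following sign-comparison claim. Given two perfect matchings $M_1,M_2$ of $G(A)$, their symmetric difference $M_1\triangle M_2$ is a disjoint union of simple cycles (every vertex has degree $0$ or $2$ in it), each alternating $M_1$- and $M_2$-edges and each a cycle of $G(A)\subseteq G(S)$. I claim that if $M_1\triangle M_2$ is a single cycle $C$ of length $2\ell$ carrying $p$ c-pairs, then the two corresponding terms of $\det A$ differ in sign by exactly $(-1)^{p+1}$. To prove this I would track two contributions separately: the ratio of the permutation signs, which for a single bipartite $2\ell$-cycle (an $\ell$-cycle of $\sigma_1\sigma_2^{-1}$) equals $(-1)^{\ell-1}$; and the ratio of the products of entry signs, which equals the product $\prod_{e\in C}\sign(A_e)$ of all edge signs around $C$. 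The combinatorial heart is the identity that, walking around $C$ and grouping the two edges meeting at each of its $\ell$ column vertices, one recovers every edge of $C$ exactly once; since a column contributes $+1$ when its two edges form a c-pair and $-1$ otherwise, this gives $\prod_{e\in C}\sign(A_e)=(-1)^{\ell-p}$. Multiplying yields $(-1)^{\ell-1}(-1)^{\ell-p}=(-1)^{p+1}$, so an e-cycle ($p$ even) reverses the sign while an o-cycle ($p$ odd) preserves it. For a general symmetric difference the relative sign factors as $\prod_i(-1)^{p_i+1}$ over the component cycles $C_i$, because both $\sigma_1\sigma_2^{-1}$ and the entry-sign ratio decompose over the $C_i$.

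Granting this claim, the implication that no e-cycle forces SSD is direct. Suppose $G(S)$ has no e-cycle and let $A$ be any square submatrix. If $G(A)$ has no perfect matching, then $A$ is singular by Lemma \ref{lem:crucial}. Otherwise choose any two perfect matchings $M_1,M_2$: every cycle of $M_1\triangle M_2$ is an o-cycle, so each factor $(-1)^{p_i+1}=+1$ and the two terms share the same sign. As $M_1,M_2$ are arbitrary, all terms of $\det A$ carry one sign and $A$ is SNS. Hence every square submatrix is SNS or singular, i.e.\ $S$ is SSD.

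For the converse I argue by contraposition: assume $G(S)$ contains an e-cycle $C$ spanning $\ell$ rows and $\ell$ columns, and let $A$ be the $\ell\times\ell$ submatrix of $S$ on exactly those rows and columns. The two alternating edge sets of $C$ are perfect matchings $M_1,M_2$ of $G(A)$, so $\det A$ contains the two corresponding terms, which have opposite signs since $C$ is an e-cycle. Because distinct matchings give distinct monomials in the variables $A_{ij}$, these terms cannot cancel, so $\det A\not\equiv 0$ (hence $A$ is not singular) yet the expansion carries both a plus and a minus term (hence $A$ is not SNS). Thus $A$ is neither SNS nor singular and $S$ fails to be SSD. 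The one delicate point outside the main sign computation is this non-cancellation remark together with the understanding (Remark \ref{rem:perfMatch}) that at the level of sign patterns ``not singular'' means $\det A$ is a nonzero polynomial.
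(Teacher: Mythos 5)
Your proof is correct, but it takes a genuinely different route from the paper, which gives no self-contained argument at all: it simply cites the classical result \cite[Theorem 3.2.1]{BS95} (due to Bassett, Maybee and Quirk) and observes that the proposition is the special case $m(A)=0$ of Theorem \ref{thm:count}. The paper's underlying machinery (the proof of Theorem \ref{thm:count}) normalizes $A$ to have negative diagonal, fixes the diagonal perfect matching $\cW$, decomposes each permutation $\sigma$ into disjoint $S_n$-cycles via Lemma \ref{lem:detPerm}, and reads off the sign of each term from the parity of c-pairs in the resulting $\cW$-interlacing cycles; this compares every term against the single distinguished diagonal term. You instead compare terms pairwise: the symmetric difference $M_1\triangle M_2$ of two perfect matchings decomposes into alternating cycles, and your local computation (permutation-sign ratio $(-1)^{\ell-1}$ for an $\ell$-cycle, times entry-sign product $(-1)^{\ell-p}$ obtained by grouping the two cycle edges at each column vertex, giving relative sign $(-1)^{p+1}$) is verifiably correct and consistent with the paper's e-cycle/o-cycle dichotomy. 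Your approach buys a self-contained, elementary proof that needs no nonzero-diagonal normalization and no enumeration of sets of disjoint interlacing cycles, and your converse correctly handles the non-cancellation point (distinct matchings yield distinct monomials in the free variables $A_{ij}$), which is what makes ``not singular'' meaningful at the sign-pattern level; the paper's route buys more, namely the exact counts $t(A)$ and $m_{\pm}(A)$, of which SSD is only the degenerate case. Essentially, you have reconstructed the classical Bassett--Maybee--Quirk style argument that the paper outsources to \cite{BS95}.
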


\begin{proof}  This fact is essentially classical, cf.~\cite[Theorem
3.2.1]{BS95}.
Also, it is a special case of 
Theorem \ref{thm:count}.
\end{proof}

\subsection{Many Cycles: Square Matrices}
\label{subsec:graphicalTestSquare}

Next we turn to the more general situation where e-cycles occur in
$G(A)$.  The bipartite graph $G(A)$ enables us to count the number
of positive, negative and
anomalous signs in the determinant expansion of a square
sign pattern $A$.  We permute and re-sign to make sure all diagonal
entries of $A$ are negative.  Thus these diagonal entries
correspond precisely to a perfect matching $\cW$ in $G(A)$.  A
cycle in $G(A)$ that contains each edge $(c, \cW(c))$ in $G(A)$
corresponding to any column $c$ it touches, is called {\bf
interlacing} with respect to $\cW$, or {\bf $\cW$-interlacing} for
short.

\begin{rem}
To a given signed bipartite graph $G$ we can associate
(uniquely up to transposition and a permutation of rows
and columns) a
sign pattern $A$ with $G(A)=G$. The {\bf number of
anomalous signs} of a signed bipartite
graph $G$ with equipollent vertex sets is defined to be
$m(G)=m(A)$.
\end{rem}

\begin{exam}\label{ex:interlace}
Consider the following two graphs.  \[ \xymatrix{
\begin{xy} *+{\txt{R1} }* \frm{o}
\end{xy}  \ar@{{}--{}}[d] \ar@{{}-{}}[r] & \fbox{C3}
\ar@{{}--{}}[d]
&
\begin{xy} *+{ \txt{R3} }* \frm{o}
\end{xy} \ar@{{}-{}}[l]
\\ \fbox{C2} \ar@{{}-{}}[r] & \begin{xy}
*+{\txt{R2}}* \frm{o} \end{xy}
&
\fbox{C1}
\ar@{{}-{}}[l]
} \qquad \qquad\qquad
\xymatrix{
\begin{xy} *+{\txt{R1} }* \frm{o}
\end{xy}  \ar@{{}--{}}[d] \ar@{{}-{}}[r] & \fbox{C3}
\ar@{{}--{}}[d]
&
\begin{xy} *+{ \txt{R3} }* \frm{o}
\end{xy} \ar@{{}-{}}[l]
\\ \fbox{C2} \ar@{{}-{}}[r] & \begin{xy}
*+{\txt{R2}}* \frm{o} \end{xy}
&
\fbox{C1}
\ar@{{}-{}}[l] \ar@{{}-{}}[u]
}
 \]
\centerline{\hfill Graph $G_1$ \hfill\qquad\qquad Graph $G_2$\hfill}

\noindent Graph $G_1$ admits only one perfect matching $\cW$. Namely the set of edges
$\{$C1$-$R2, C2$-$R1, C3$-$R3$\}$. Hence its only cycle
R1$-$C3$-$R2$-$C2$-$R1 is not $\cW$-interlacing.
The sign pattern associated to $G_1$ is
$$
B=\left[\begin{array}{crr}
 0&B_{12}&-B_{13}\\
-B_{21}&-B_{22}&B_{23}\\
 0&\hfill 0\hfill&-B_{33}
\end{array}\right].
$$
As $\det(B)=-B_{12}B_{21}B_{33}$, $m(G_1)=m(B)=0$.

Graph $G_2$ on the other hand admits three perfect matchings.  For
instance, with respect to the matching $\{$C1$-$R3, C2$-$R2,
C3$-$R1$\}$, the cycle R1$-$C3$-$R2$-$C2$-$R1 is $\cW$-interlacing,
while the cycle C1$-$R3$-$C3$-$R2$-$C2$-$R1$-$C1 is not.
The sign pattern associated to $G_2$ is
$$
C=\left[\begin{array}{crr}
0&C_{12}&-C_{13}\\
-C_{21}&-C_{22}&C_{23}\\
-C_{31}&\hfill 0\hfill&-C_{33}
\end{array}\right].
$$
Since $\det(C)=-C_{12}C_{23}C_{31}-C_{12}C_{21}C_{33}+C_{13}C_{22}C_{33}$, $m(G_2)=m(C)=1$.

Note that the number of interlacing cycles depends on the
matching chosen. For instance, the graph $G_3$
\[
\xymatrix{
\begin{xy} *+{\txt{R1} }* \frm{o}
\end{xy}  \ar@{{}--{}}[d] \ar@{{}-{}}[r] & \fbox{C3}
\ar@{{}--{}}[d]
&
\begin{xy} *+{ \txt{R3} }* \frm{o}
\end{xy} \ar@{{}-{}}[l]\ar@{{}--{}}[r]
&
\fbox{C4} \ar@{{}-{}}[d]
\\ \fbox{C2} \ar@{{}-{}}[r] & \begin{xy}
*+{\txt{R2}}* \frm{o} \end{xy}
&
\fbox{C1}\ar@{{}--{}}[r]
\ar@{{}-{}}[l] \ar@{{}-{}}[u]
& \begin{xy}
*+{\txt{R4}}* \frm{o} \end{xy}
} \]
\centerline{Graph $G_3$}

\noindent with the matching $\{$C1$-$R3, C2$-$R1, C3$-$R2, C4$-$R4$\}$
admits three interlacing cycles, while it has four cycles
interlacing 
with respect to the matching $\{$C1$-$R2, C2$-$R1, C3$-$R3, C4$-$R4$\}$.
\qed \end{exam}

The following theorem gives our {\it det sign test}
counting the number of signs in the
determinant expansion of a square sign pattern $A$ in
terms of $G(A)$.
For the sake of simplicity it is stated for matrices with
nonzero diagonal entries. This causes no loss of generality
since such a matrix can be obtained from any square invertible matrix
with a permutation of rows.

\begin{theorem}\label{thm:count} Let $A$ be a square
sign pattern
with nonzero diagonal elements. The diagonal gives us a perfect matching
$\cW$ that is fixed.
\ben\item[\rm (1)]
The number of terms,
$t(A)$, in
the determinant expansion of $A$ is one plus the cardinality of
the set of all sets of disjoint $\cW$-interlacing cycles of $G(A)$.
  \item[\rm (2)]
Let $\ep$ be the sign of the product of the diagonal elements
of $A$. Then the number of terms of
sign $-\ep$ in the determinant expansion of $A$, $m_{-\ep}(A)$, equals
the cardinality of the set of all sets of disjoint
$\cW$-interlacing cycles that contain an odd
number of $\cW$-interlacing e-cycles.
 \een
\end{theorem}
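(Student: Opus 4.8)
The plan is to read the determinant expansion \eqref{eq:detdef} through the perfect-matching dictionary of Lemma \ref{lem:crucial} and Remark \ref{rem:perfMatch}, and then to organize the perfect matchings according to how they differ from the fixed diagonal matching $\cW$. Since the diagonal entries are nonzero, $\cW$ is the matching $\{(R_i,C_i)\}$ and corresponds to the identity permutation, whose term in \eqref{eq:detdef} has sign $\ep=\prod_i \sign(A_{ii})$. Every other nonzero term corresponds to a permutation $\sigma$, equivalently to the matching $\{(R_i,C_{\sigma(i)})\}$, and the goal is to encode $\sigma$ by exactly the cycle data that the definition of $\cW$-interlacing captures.

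First I would establish the bijection
\[
\{\text{perfect matchings of } G(A)\}\ \longleftrightarrow\ \{\text{sets of disjoint } \cW\text{-interlacing cycles of } G(A)\},
\]
sending $\sigma$ to the connected components of the symmetric difference $\cW\,\triangle\,\sigma$. At each vertex both $\cW$ and $\sigma$ use exactly one edge, so $\cW\,\triangle\,\sigma$ has all degrees $0$ or $2$ and is a disjoint union of cycles alternating $\cW$-edges with $\sigma$-edges; alternation forces each cycle to contain the $\cW$-edge at every column it meets, hence to be $\cW$-interlacing, and distinct components are vertex-disjoint. Concretely, a length-$\ell$ cycle $(i_1\,i_2\,\cdots\,i_\ell)$ of $\sigma$ yields the bipartite cycle $R_{i_1}-C_{i_2}-R_{i_2}-\cdots-C_{i_\ell}-R_{i_\ell}-C_{i_1}-R_{i_1}$, while the fixed points of $\sigma$ are exactly the columns left untouched. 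The map is invertible (flipping edges along any chosen disjoint family of interlacing cycles turns $\cW$ into a new matching), and the empty family corresponds to $\sigma=\mathrm{id}$, i.e.\ to $\cW$. Counting both sides gives part (1): $t(A)$ is $1$ (the diagonal term, from the empty family) plus the number of nonempty families.

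For part (2) I would compute the sign of the $\sigma$-term relative to $\ep$. Writing $s_{ij}=\sign(A_{ij})$, its sign is $\sign(\sigma)\prod_i s_{i,\sigma(i)}$, and multiplying by $\ep=\prod_i s_{ii}$ (its own inverse) gives
\[
\frac{\text{sign of }\sigma\text{-term}}{\ep}=\sign(\sigma)\prod_i s_{i,\sigma(i)}\,s_{ii}.
\]
Fixed points contribute $s_{ii}^2=1$, so the expression factors over the cycles of $\sigma$. For a single cycle $(i_1\cdots i_\ell)$ the permutation contributes $(-1)^{\ell-1}$, and the entry signs contribute $\prod_{j=1}^{\ell} s_{i_{j-1},i_j}\,s_{i_j,i_j}$. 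The key observation is that the two edges of the bipartite cycle meeting at column $C_{i_j}$ are precisely the incoming $\sigma$-edge $(R_{i_{j-1}},C_{i_j})$ and the $\cW$-edge $(R_{i_j},C_{i_j})$, so $s_{i_{j-1},i_j}s_{i_j,i_j}=+1$ exactly when $C_{i_j}$ is a c-pair; hence the entry-sign product is $(-1)^{\ell-p}$, where $p$ is the number of c-pairs in the cycle. Combining, the cycle's contribution is $(-1)^{\ell-1}(-1)^{\ell-p}=(-1)^{p+1}$, which is $-1$ for an e-cycle ($p$ even) and $+1$ for an o-cycle ($p$ odd). Thus the $\sigma$-term has sign $-\ep$ exactly when its interlacing family contains an odd number of e-cycles, giving part (2).

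The conceptually clean steps are the matching-to-cycles bijection and the reduction to a per-cycle computation; the delicate point is the sign bookkeeping of the last paragraph, namely checking that the factor $(-1)^{\ell-1}$ from $\sign(\sigma)$ and the factor $(-1)^{\ell-p}$ from the entry signs combine to a quantity depending only on the parity of the c-pair count. I would double-check the indexing in the c-pair identification (which edge at each column is the $\cW$-edge and which is the $\sigma$-edge) and confirm that the cancellation of the fixed-point factors $s_{ii}^2$ is exactly what makes multiplication by $\ep$ the correct normalization.
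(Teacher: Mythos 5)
Your proposal is correct and follows essentially the same route as the paper: both identify nonzero determinant terms with permutations, decompose each permutation into disjoint cycles corresponding to disjoint $\cW$-interlacing cycles of $G(A)$ (your symmetric difference $\cW\,\triangle\,\sigma$ is exactly the paper's factorization into $S_n$-cycles in Lemma \ref{lem:detPerm}), and determine the sign of each term from the c-pair parity of its cycles. The only difference is bookkeeping: the paper normalizes all diagonal entries to be negative and then does a case analysis on the parity of the number of cycles, whereas you multiply by $\ep$ and get the cleaner per-cycle factor $(-1)^{p+1}$ directly, which is a mild streamlining rather than a different argument.
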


\begin{rem} By disjoint cycles we mean cycles with no common
vertices. The empty set is \emph{not} counted as a set of cycles.
\qed
\end{rem}

\begin{rem}
As observed in Example \ref{ex:interlace}, the number of
interlacing cycles depends on the matching $\cW$ chosen. However,
the numbers $t(A)$, $m_{\pm}(A)$ and $m(A)$ obtained from Theorem
\ref{thm:count}
are (clearly) independent of $\cW$.
\qed
\end{rem}

The special case of Theorem \ref{thm:count} where
$m(A)=0$ is settled by \cite[Theorem 3.2.1]{BS95} which is due
to Bassett, Maybee and Quirk \cite{BMQ68}.

The count of the signs in
the determinant expansion is simple in extreme cases, as the
following corollary shows.

\begin{cor} \label{cor:count} Let $A$ be a square sign pattern
with nonzero diagonal entries.  The diagonal induces a perfect matching
$\cW$.
Let $\ep$ denote the sign of the product of the diagonal elements
of $A$.
  \ben \item[\rm (1)] Suppose that $G(A)$ has $t$ cycles interlacing
with respect to $\cW$ and each pair of cycles has a nonempty
intersection.  Then the number of terms in the determinant
expansion of $A$ is $1+t$ and $m_{-\ep}(A)$
is the number of $\cW$-interlacing e-cycles.
\item[\rm (2)] Suppose that there are $t\geq 1$ cycles
of $G(A)$ each of which is $\cW$-interlacing and all are pairwise
disjoint. Then the number of terms in the determinant expansion of
$A$ is $2^t$ and
the number of anomalous signs is either $0$ $($if all $\cW$-interlacing
cycles are o-cycles$)$ or $2^{t-1}$. In the former case,
$m_{\ep}=2^t$ and in the
latter case $m_-(A)=m_+(A)=2^{t-1}$.
\een
\end{cor}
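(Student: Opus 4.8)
The plan is to derive both statements directly from Theorem \ref{thm:count} by analyzing the sets of pairwise disjoint $\cW$-interlacing cycles in each of the two extreme configurations. In both cases the combinatorial content is a counting exercise over subsets of the fixed collection of interlacing cycles, so the proof should be short.

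For part (1), I would first observe that if every pair among the $t$ interlacing cycles meets in a vertex, then a set of pairwise disjoint $\cW$-interlacing cycles can contain at most one cycle. Since the empty set is not counted, the sets of disjoint $\cW$-interlacing cycles are precisely the $t$ singletons, so Theorem \ref{thm:count}(1) gives $t(A)=1+t$ at once. For the sign count, a singleton contains an odd number of $\cW$-interlacing e-cycles exactly when its one cycle is an e-cycle; hence by Theorem \ref{thm:count}(2) the number of terms of sign $-\ep$ is exactly the number of $\cW$-interlacing e-cycles.

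For part (2), since the $t$ cycles are pairwise disjoint, the sets of disjoint $\cW$-interlacing cycles are precisely the nonempty subsets of this collection, of which there are $2^t-1$; Theorem \ref{thm:count}(1) then yields $t(A)=1+(2^t-1)=2^t$. To count anomalous signs, let $k$ be the number of e-cycles among the $t$ cycles. By Theorem \ref{thm:count}(2), $m_{-\ep}(A)$ equals the number of subsets whose intersection with the set of e-cycles has odd cardinality. When $k\geq 1$ one chooses an odd-size subset of the $k$ e-cycles in $2^{k-1}$ ways and an arbitrary subset of the remaining $t-k$ o-cycles in $2^{t-k}$ ways, giving $2^{k-1}\cdot 2^{t-k}=2^{t-1}$; the empty set has an even (zero) number of e-cycles, so the ``empty set not counted'' convention of Theorem \ref{thm:count} does not affect this tally. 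When $k=0$ there are no such subsets and $m_{-\ep}(A)=0$.

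It then remains only to translate $m_{-\ep}$ into the $m_\pm$ and $m$ quantities via $m_{\ep}(A)=t(A)-m_{-\ep}(A)=2^t-m_{-\ep}(A)$. If $k=0$ then $m_{-\ep}=0$, $m_{\ep}=2^t$, and $m(A)=0$; if $k\geq 1$ then $m_{-\ep}=2^{t-1}$, $m_{\ep}=2^t-2^{t-1}=2^{t-1}$, so $m_+(A)=m_-(A)=2^{t-1}$ and $m(A)=2^{t-1}$, which is the asserted dichotomy. I do not expect any genuine obstacle here: the argument is pure bookkeeping on top of Theorem \ref{thm:count}, and the only points needing a little care are the subset-counting identity $2^{k-1}\cdot 2^{t-k}=2^{t-1}$ and the remark that the exclusion of the empty set is harmless because it contributes an even number of e-cycles.
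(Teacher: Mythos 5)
Your proposal is correct and follows essentially the same route as the paper: both parts are read off from Theorem \ref{thm:count} by observing that the disjoint-interlacing-cycle sets are the $t$ singletons in case (1) and the $2^t-1$ nonempty subsets in case (2), with $m_{-\ep}(A)=2^{k-1}\cdot 2^{t-k}=2^{t-1}$ counted exactly as the paper does (the paper merely derives the odd-subset count $2^{r-1}$ via an explicit binomial identity where you cite it directly). Your added remark that excluding the empty set is harmless--since it contains zero, hence an even number of, e-cycles--is a small point the paper leaves implicit, but it does not change the argument.
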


\begin{proof}  For (1) note that every set of disjoint interlacing cycles
contains only one cycle. (2) By Theorem \ref{thm:count}.(1),
the number of terms in the
determinant expansion of $A$ is just the number of all subsets of
$\{1,\ldots,t\}$, i.e., $2^t$.

For the second part of the claim we will compute the number
$m_{-\ep}(A)$. Let $r$ be the number of
e-cycles among the $t$ interlacing cycles. Of course, if $r=0$,
then there will be no anomalous signs. So assume $r>0$. There
are
$t-r$ interlacing o-cycles.  Since the
cycles are pairwise disjoint, we have by Theorem \ref{thm:count}
that a set consisting of some of the $t$ interlacing cycles
contributes a term with sign $-\ep$ to the determinant expansion of $A$
iff it contains an odd number of the $r$ e-cycles. Thus to
find $m_{-\ep}(A)$
we multiply the number of
ways we can choose an odd number of e-cycles from the $r$ e-cycles
by the number of ways we can choose any number of o-cycles from
the $t-r$ o-cycles.
The number of ways we can choose an odd number of
e-cycles from the $r$ e-cycles is $$\sum_{k=1}^{\lfloor
\frac{r-1}2 \rfloor}\binom{r}{2k+1}.$$ To simplify this, notice
that $0=(-1+1)^r = \sum_{k=0}^{r} \binom{r}{k}(-1)^k$ implies
$$\sum_{k=1}^{\lfloor \frac{r-1}2 \rfloor} \binom{r}{2k+1} =
\frac{1}{2}\sum_{k=0}^{r}\binom{r}{k} = 2^{r-1}.$$ The number of
ways we can choose a subset of o-cycles from the $t-r$ o-cycles is
$2^{t-r}$. Thus, $m_{-\ep}(A)=
2^{r-1}\cdot 2^{t-r}=2^{t-1}$ and
hence $m(A)=m_{\pm}(A)=2^{t-1}$.
\end{proof}

\begin{exam}
\label{ex:1badSign}
We now show how to determine when the determinant expansion of a square sign pattern
$A$ has no or one anomalous sign. Let us assume that all diagonal entries
of $A$ are nonzero and thus induce a perfect matching $\cW$. By Theorem
\ref{thm:count}.(2), $m(A)=0$ iff $G(A)$ contains no $\cW$-interlacing
e-cycles.

We claim that
{\it $m(A)=1$ iff $G(A)$ contains exactly one $\cW$-interlacing e-cycle
and no $\cW$-interlacing cycles disjoint from it.} Clearly, $(\Leftarrow)$
follows from Theorem \ref{thm:count}.
For the converse, note that if $G(A)$ contains at least two
$\cW$-interlacing e-cycles, then $m(A)\geq 2$ by Theorem
\ref{thm:count}.(2). Similarly we exclude the possibility of only
one $\cW$-interlacing e-cycle with other $\cW$-interlacing cycles disjoint
from it.
\qed
\end{exam}

\subsubsection{\bf Proof of Theorem \ref{thm:count}}

As preparation for the proof of the theorem, we briefly recall some
well-known facts about $S_n$.
A $S_n$-cycle $s=(s_{1}\, \cdots\,
s_{m})$ is the permutation mapping
$$
s_1\mapsto s_{2}\mapsto\cdots\mapsto s_{m}\mapsto s_{1}
$$
and fixing $\{1,\ldots,n\}\setminus\{s_{1}, \cdots\,
s_{m}\}$ pointwise.
To avoid collision with
cycles in various graphs appearing in the paper, we call
these cycles $S_n$-cycles.

\begin{exam}
For instance, the $S_{4}$-cycle $\sigma=(1\,2\,4)$ is the
mapping
$$
\left(
\begin{array}{c}
\xymatrix{
1 \ar@{|->}[d]&2 \ar@{|->}[d]&3 \ar@{|->}[d]&4 \ar@{|->}[d]\\
2&4&3&1
}
\end{array}
\right),
$$
while the mapping
$$\left(
\begin{array}{c}
\xymatrix{
1 \ar@{|->}[d]&2 \ar@{|->}[d]&3 \ar@{|->}[d]&4 \ar@{|->}[d]&5 \ar@{|->}[d]&6 \ar@{|->}[d]&7 \ar@{|->}[d]\\
2&4&5&1&3&6&7
}
\end{array}\right)
$$
can be written as $(1\,2\,4)(3\, 5)$.
\qed
\end{exam}

Every permutation $\sigma\in S_n$ can be written uniquely (up to the ordering in
the product) as a product of disjoint $S_n$-cycles.
Conversely, every set
of disjoint $S_n$-cycles gives a permutation in $S_n$.

\begin{lemma}\label{lem:detPerm}
If $\sigma=\tau_{1}\cdots\tau_{m}$ is a factorization
of $\sigma\in S_{n}$ into disjoint $S_{n}$-cycles,
and $\tau_{i}=(\tau_{i1}\, \tau_{i2}\,\cdots\,
\tau_{it_{i}})$, then
$$
{\rm sign}(\sigma)
\prod_{i=1}^{n} A_{i,\sigma(i)} =
\prod_{j=1}^{m} (-1)^{t_{j}-1} \prod_{i=1}^{t_{j}}
A_{\tau_{j\, i}, \tau_{j\,i+1}}
\prod_{k\not\in \{\tau_{ij}\}} A_{k,k}
$$
$($with the convention $\tau_{j\, t_j+1}=\tau_{j 1})$.
\end{lemma}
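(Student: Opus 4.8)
The plan is to decompose each side into a sign factor and a monomial in the entries $A_{ij}$, and to match them orbit by orbit using the disjoint-cycle factorization $\sigma=\tau_1\cdots\tau_m$ that is given. The whole statement is then simply a transcription of the cycle decomposition of $\sigma$ into the language of the matrix entries.

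First I would rewrite the monomial $\prod_{i=1}^n A_{i,\sigma(i)}$ according to the orbit structure of $\sigma$. Each index $i\in\{1,\ldots,n\}$ either lies in the support of exactly one cycle $\tau_j$ or is a fixed point of $\sigma$. If $k$ is fixed then $\sigma(k)=k$, contributing the factor $A_{k,k}$; collecting these over the non-support indices gives the product $\prod_{k\not\in\{\tau_{ij}\}}A_{k,k}$. If instead $i$ lies in $\tau_j$, say $i=\tau_{j\ell}$, then by the definition of the $S_n$-cycle $(\tau_{j1}\,\cdots\,\tau_{jt_j})$ we have $\sigma(i)=\tau_{j,\ell+1}$ under the cyclic convention $\tau_{j,t_j+1}=\tau_{j1}$, so the factors indexed by the support of $\tau_j$ combine to $\prod_{i=1}^{t_j}A_{\tau_{ji},\tau_{j\,i+1}}$. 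Hence
$$\prod_{i=1}^n A_{i,\sigma(i)}=\prod_{j=1}^m\prod_{i=1}^{t_j}A_{\tau_{ji},\tau_{j\,i+1}}\cdot\prod_{k\not\in\{\tau_{ij}\}}A_{k,k}.$$

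Next I would compute the sign. Because $\sign$ is a homomorphism $S_n\to\{\pm1\}$, the factorization gives $\sign(\sigma)=\prod_{j=1}^m\sign(\tau_j)$, so it suffices to find the sign of a single $S_n$-cycle of length $t_j$. A cycle $(a_1\,a_2\,\cdots\,a_t)$ equals the product of the $t-1$ transpositions $(a_1\,a_t)(a_1\,a_{t-1})\cdots(a_1\,a_2)$, and therefore has sign $(-1)^{t-1}$; applying this to each $\tau_j$ yields $\sign(\sigma)=\prod_{j=1}^m(-1)^{t_j-1}$. Multiplying this sign against the monomial decomposition above reproduces the right-hand side exactly, which completes the proof.

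I do not expect a genuine obstacle here, since all the mathematical content is concentrated in the elementary sign formula $(-1)^{t-1}$ for a length-$t$ cycle together with the multiplicativity of $\sign$. The only points demanding a little care are the cyclic index convention $\tau_{j,t_j+1}=\tau_{j1}$, so that the last edge of each cycle closes up correctly, and the bookkeeping ensuring the fixed-point product $\prod_k A_{k,k}$ is recorded once rather than once per cycle.
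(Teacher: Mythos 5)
Your proof is correct and takes essentially the same approach as the paper's: the paper's (much terser) argument rests on exactly the two ingredients you supply, multiplicativity of $\sign$ together with $\sign(\tau_i)=(-1)^{t_i-1}$, with the orbit-by-orbit regrouping of the monomial $\prod_{i=1}^{n}A_{i,\sigma(i)}$ into cycle factors and fixed-point factors $A_{k,k}$ left implicit. The only additional content in the paper's proof is an aside identifying each $\tau_i$ with a cycle of the bipartite graph $G(A)$, which serves to set up the later use of the lemma in Theorem \ref{thm:count} rather than to establish the identity itself.
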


\begin{proof}
If $\sigma$ contributes to the determinant, then
every $\tau_{i}$ induces a cycle
of $G(A)$. For instance, the cycle $G(\tau_{i})$
corresponding to $\tau_{i}$ is defined to be the subgraph
$$
R(\tau_{i1}) - C(\tau_{i2}) -
R(\tau_{i2}) - C(\tau_{i3}) -
\cdots -
R(\tau_{it_{i}}) - C(\tau_{i1}) -
R(\tau_{i1})
$$
of $G(A)$.
The other ingredient is ${\rm sign}(\sigma)={\rm sign}(\tau_{1})\cdots
{\rm sign}(\tau_{m})$ and ${\rm sign}(\tau_{i})=(-1)^{t_{i}-1}$.
\end{proof}

\begin{proof}[Proof of Theorem {\rm \ref{thm:count}}]
Let $A$ be $n\times n$.
Statement (1) follows from Remark \ref{rem:perfMatch}.
To see why (2) is is true, we invoke the determinant
expansion formula \eqref{eq:detdef}.
For convenience we assume that all diagonal entries of
$A$ are negative.
Then $\ep=(-1)^n$ and we count the number of terms
with sign $-\ep$.
Each term
$x={\rm sign}(\sigma) \prod_{i=1}^{n}A_{i, \sigma(i)}$
in the expansion gives us a permutation $\sigma\in S_{n}$.
Since
every permutation can be written uniquely as a product
of disjoint $S_n$-cycles, we obtain a set of disjoint cycles
$\tau_{1},\ldots,\tau_{\ell}$ with $\sigma=\tau_{1}
\cdots\tau_{\ell}$. Say $\tau_{i}=(\tau_{i1}\, \tau_{i2}\,\cdots\,
\tau_{it_{i}})$. By the previous lemma,
$$
x= \prod_{j=1}^{\ell} (-1)^{t_{j}-1} \prod_{i=1}^{t_{j}}
A_{\tau_{j\, i}, \tau_{j\,i+1}}
\prod_{k\not\in \{\tau_{ij}\}} A_{k,k}.
$$
Observe that the sign of a product of the form
$ \prod_{i=1}^{t_{j}} A_{\tau_{j i}, \tau_{j\, i+1}}$
equals
$$
(-1)^{\text{number of c-pairs in } G(\tau_{i})}=:{\rm sign}(G(\tau_{i})).
$$
Taking into account that all diagonal entries are
negative, the sign of $x$ then equals the sign of
$$
(-1)^{\sum_{i=1}^{\ell}(t_{i}-1)} (-1)^{n-\sum_{i=1}^{\ell}t_{i}}
\prod_{i=1}^{\ell} {\rm sign}(G(\tau_{i})).
$$
This simplifies further to
$$
(-1)^{n-\ell} \prod_{i=1}^{\ell} {\rm sign}(G(\tau_{i})).
$$
In order for the term $x$ to have sign $(-1)^{n-1}$,
$(-1)^\ell \prod_{i=1}^\ell \sign (G(\tau_i))$
must not be equal 1.
We will show this is the case iff the number of e-cycles among
$\tau_0, \dots, \tau_\ell$
is odd.

\ben
\item[Case (1):] Suppose $\ell$ is odd.
Then
\begin{align*}
&(-1)^\ell \prod_{i=1}^\ell \sign (G(\tau_i))=-1
 \iff \prod_{i=1}^\ell \sign (G(\tau_i))=1\\
& \iff (-1)^{\# \text{(o-cycles among }\tau_0, \dots, \tau_\ell)}
=1\\
& \iff \# \text{ (o-cycles among }\tau_0, \dots, \tau_\ell) \text{ is even}\\
& \iff \# \text{ (e-cycles among }\tau_0, \dots, \tau_\ell) \text{ is odd},
\end{align*}
since $\ell$ is odd and $\#$ (e-cycles) $ = \ell- \#$ (o-cycles).

\smallskip
\item[Case (2):] Suppose $\ell$ is even.  Then
\begin{align*}
&(-1)^\ell \prod_{i=1}^\ell \sign (G(\tau_i))=-1 \iff
 \prod_{i=1}^\ell \sign (G(\tau_i))=-1\\
 &\iff (-1)^{\# \text{(o-cycles among }\tau_0, \dots, \tau_\ell)}
=-1\\
& \iff \# \text{ (o-cycles among }\tau_0, \dots, \tau_\ell) \text{ is odd}\\
& \iff \# \text{ (e-cycles among }\tau_0, \dots, \tau_\ell) \text{ is odd},
\end{align*}
since $\ell$ is even and $\#$ (e-cycles) $ = \ell- \#$ (o-cycles).
\qedhere
\een
\end{proof}

\subsection{Many Cycles: Nonsquare Matrices}
\label{subsec:graphicalTestNonSquare}

The graph-theoretic test described in
\S \ref{subsec:graphicalTestSquare}
gives a det sign test settling Question (J-sign).
In this section we extend the det sign test to nonsquare
sign patterns $S$.

A cycle has the property that the number of rows it
touches is the same as the number of columns it touches.
A set of cycles is called {\bf balanced}
if the
number of rows they touch is the same as the number
of columns they touch.
Every balanced set of cycles picks out a
square submatrix
$A$ of $S$ and hence induces a sub-bipartite
graph $G(A)$ of $G(S)$. Such a submatrix and
the sub-bipartite graph are both said to
be {\bf balanced}.
Note each column and row of $A$ appears in at least
one cycle in $G(A)$.

\begin{prop}\label{prop:a1}
For every square invertible submatrix $B$ of a
sign pattern $S$ there
is a balanced square submatrix $A$ of $S$ with
$m(A)=m(B)$. In fact, $A$ can be chosen to be
a submatrix of $B$.
\end{prop}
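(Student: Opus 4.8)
The plan is to reduce to the square situation handled by Theorem \ref{thm:count} and then to delete exactly those rows and columns of $B$ that carry no interlacing cycle. Since $B$ is square and invertible, Lemma \ref{lem:crucial} supplies a perfect matching of $G(B)$; interchanging rows to place this matching on the diagonal and then multiplying rows by $-1$ where necessary so that every diagonal entry is negative produces a sign pattern $B'$ whose diagonal is a perfect matching $\cW$. By Lemma \ref{it:mpmn} none of these operations alters the number of anomalous signs, so $m(B')=m(B)$, and it suffices to produce the desired balanced submatrix inside $B'$.

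Next I would let $I$ be the set of indices touched by at least one $\cW$-interlacing cycle of $G(B')$, and set $A:=B'[I,I]$. The crucial structural point, and the step I expect to require the most care, is that $A$ is genuinely square and balanced. Here one exploits the definition of interlacing: whenever a $\cW$-interlacing cycle meets a column $c$ it must contain the matching edge $(c,\cW(c))$ and hence also meets row $\cW(c)$; since $\cW$ is the diagonal and a cycle meets equally many rows and columns, the set of rows it touches equals the set of columns it touches. Taking the union over all $\cW$-interlacing cycles then shows that the rows touched and the columns touched are indexed by one and the same set $I$, so $A$ is square, and by construction every row and every column of $A$ lies on some cycle of $G(A)$, i.e.\ $A$ is balanced.

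Finally I would compare the interlacing data of $A$ and of $B'$. Because $G(A)$ is the subgraph of $G(B')$ induced on $I$ and the restricted diagonal is still $\cW$, a cycle is $\cW$-interlacing in $G(A)$ iff it is $\cW$-interlacing in $G(B')$; moreover every $\cW$-interlacing cycle of $G(B')$ already lies in $I\times I$ by the very definition of $I$. Thus the two graphs carry exactly the same sets of disjoint $\cW$-interlacing cycles, with the same e-/o-classification (the signs are inherited from $B'$), so Theorem \ref{thm:count} yields $t(A)=t(B')$ together with equal counts of those sets containing an odd number of e-cycles; hence $m(A)=m(B')=m(B)$. Undoing the row interchanges and sign changes, again invoking Lemma \ref{it:mpmn} and noting that balancedness depends only on the underlying bipartite graph and so is unaffected by the sign changes, exhibits $A$ as an honest submatrix of $B$ with $m(A)=m(B)$, as required.
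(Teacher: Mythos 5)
Your proof is correct, but it takes a genuinely different route from the paper's. The paper argues by minimal counterexample: if $B$ is not balanced, then some row (or column) lies on \emph{no} cycle of $G(B)$ whatsoever, and a direct look at the permutation expansion \eqref{eq:detdef} shows every nonzero term must then pick the diagonal entry from that row (any $S_n$-cycle through that index would trace a cycle of $G(B)$), so deleting that row and column preserves $m$; iterating peels $B$ down to a balanced submatrix without ever invoking the det sign test. You instead fix the diagonal matching $\cW$ (after the same normalizations via Lemmas \ref{lem:crucial} and \ref{it:mpmn}) and delete in one stroke everything not on a $\cW$-interlacing cycle, then certify $m(A)=m(B')$ by identifying the sets of disjoint $\cW$-interlacing cycles of $G(A)$ and $G(B')$ and applying Theorem \ref{thm:count}; your key structural step --- that an interlacing cycle $\gamma$ touches the same index set of rows as of columns, since the interlacing condition forces the column set into the row set and a cycle meets equally many of each --- is sound and is exactly what makes $A$ square and balanced. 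What each approach buys: the paper's argument is more elementary and self-contained (Proposition \ref{prop:a1} there does not depend on Theorem \ref{thm:count}, only on the determinant expansion), while yours is non-inductive and prunes more aggressively, producing an $A$ in which every row and column lies on an \emph{interlacing} cycle rather than merely on some cycle, at the price of leaning on the full det sign test. Two small points worth making explicit, neither a genuine gap: first, if $G(B')$ has no $\cW$-interlacing cycles at all (e.g.\ $B$ triangular), your $I$ is empty --- but the paper's iteration likewise deletes everything, and both proofs implicitly adopt the convention that the empty matrix is balanced with $m=0$, which the proposition needs in that degenerate case; second, when you pass from equal $t$ and equal counts of odd-e-cycle sets to $m(A)=m(B')$, the reference signs $\ep$ of $A$ and $B'$ may differ in parity ($(-1)^{|I|}$ versus $(-1)^n$), which is harmless because $m$ is the minimum of the two sign counts, but deserves a sentence.
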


\begin{proof}
Suppose $B$ is the smallest square submatrix of $S$
violating the conclusion of the lemma. After permuting
rows we assume $B$ has nonzero entries on
the diagonal.
Since $B$ is not balanced, either a
row or a column of $B$ does not appear in
any cycle in $G(B)$.
Without loss of generality we assume
this to be row $1$.

Since we assume that row $1$ does not appear in any cycle of $G(B)$,
for $\sigma\in S_n$ with $\sigma=\tau_1\cdots\tau_\ell$,
where $\tau_i$ are disjoint $S_n$-cycles, the corresponding
term in the determinant expansion $x=\sign(\sigma)\prod_{i=1}^n
B_{i,\sigma(i)}$ will be zero if $1$ appears in one of the $\tau_i$.
Hence the nonzero terms $x$ will correspond to permutations $\sigma$
with $\sigma(1)=1$.
In other words, $B_{1,1}$ will get picked from row one.
So by removing row and column one from $B$
we obtain a smaller matrix $B_{0}$ with $m(B_{0})=m(B)$.
By the minimality assumption on $B$, there is
a balanced square submatrix $A$ of $B_{0}$ with
$m(A)=m(B_{0})=m(B)$, a contradiction.
\end{proof}

By this proposition, \textit{the answer $J$ to Question $($J-sign$)$ equals
the maximal number of anomalous signs obtainable from
a balanced square submatrix of the sign pattern $S$}. So the \textit{algorithm} for
finding the desired upper bound $J$ is as follows.
Consider sets of balanced cycles in $G(S)$. Each of these
induces a square submatrix $A$ of $S$.
If $G(A)$ admits no perfect matching, we continue with
another set of balanced cycles. Otherwise we count the number
of anomalous signs in $\det A$ by the procedure
described in Theorem \ref{thm:count} of \S \ref{subsec:graphicalTestSquare}.
The highest possible count obtained is the desired sharp upper
bound $J$.

\section{Reaction form differential equations and the Jacobians}
\label{sec:Jac}

Now we turn to studying
systems of
reaction form (RF)
ordinary differential equations which act on the
nonnegative orthant $\RR_{\geq 0}^d$ in $\mathbb R^d$:
\beq
\label{eq:RF}\frac{dx}{dt}=   f(x) = Sv(x),
\eeq
where $f:\RR_{\geq 0}^d\to\RR^d$,
$S$ is a real $d\times d'$ matrix and $v$ is a column
vector consisting of $d'$ real-valued functions.

The differential equation \eqref{eq:RF}
has {\bf weak reaction form} (wRF) provided
$V(x):= v'(x)$ satisfies
$S_{ij} > 0 \Rightarrow V_{ji}(x)=0$.
If a differential equation has wRF, then it has reaction form
provided
$S_{ij}=0 \Rightarrow V_{ji}(x)= 0$
and
$S_{ij}<0\Rightarrow V_{ji}(x)\ne 0$.
The flux vector $v(x)$ is {\bf monotone nondecreasing}
(respectively, {\bf monotone increasing})
if $\frac {\partial v_j}{\partial x_i}(x)$ is either $0$
for all $x\in\RR_{\geq 0}^{d'}$ or nonnegative (respectively,
positive) for all $x\in\RR_{> 0}^{d'}$.

This section analyzes two properties the Jacobian
of $f(x)$ might have. First we say exactly when
$f'(x)$ has a sign pattern (Theorem \ref{prop:JacSigns} and
Corollary \ref{cor:JacSigns}).
Secondly we give a method
based on \S \ref{sec:Nonsquare} for counting
the number of plus and minus coefficients in its
determinant expansion.

\subsection{Sign pattern of the Jacobian}
\label{subsec:SignPatJac}
\def\tS{\tilde S}

We first say precisely when the Jacobian of a
reaction form dynamics respects a sign pattern and find that
it does surprisingly often.

\begin{cor}\label{cor:JacSigns}
Given a reaction form differential equation
$$ \frac{dx}{dt} = S v(x)$$
with monotone increasing flux vector $v(x)$.
The Jacobian $Sv'(x)$
respects the same sign pattern for all $x \in \RR^n_{>0}$ if
the bipartite graph $G(S)$ 
does not contain a cycle of length four with
three negative edges.
Conversely, if $G(S)$ does contain such a cycle,
then some matrix $\tS$  arbitrarily close to $S$,
possibly $S$ itself,
 produces  $\tS v'(x)$ which fails to respect the same sign pattern
 for all $x$ in the orthant.
\end{cor}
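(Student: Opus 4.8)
The plan is to reduce the statement to a per-entry sign analysis of the Jacobian $f'(x)=Sv'(x)$ and then to translate the resulting condition into the combinatorics of $G(S)$. With $V:=v'$ one has $f'_{ik}(x)=\sum_j S_{ij}V_{jk}(x)$. The reaction form hypothesis says $V_{jk}(x)\neq 0$ exactly when $S_{kj}<0$, and monotone increasingness forces $V_{jk}(x)>0$ whenever it is nonzero, so
$$ f'_{ik}(x)=\sum_{j:\,S_{kj}<0}S_{ij}\,V_{jk}(x),\qquad V_{jk}(x)>0 . $$
Thus the $(i,k)$ entry is a fixed-coefficient combination of strictly positive functions, the coefficients $S_{ij}$ ranging over the columns $j$ with $S_{kj}<0$; it is forced to have constant sign on all of $\RR^n_{>0}$ once these nonzero coefficients share a single sign, and the only way a sign can fail to be constant is that they carry both signs. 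Note that the diagonal entries ($i=k$) are automatically $\le 0$, since for $i=k$ every contributing coefficient equals some $S_{kj}<0$; so only off-diagonal entries are ever in question.

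For the \emph{sufficiency} direction I would argue the contrapositive. Suppose some off-diagonal entry $(i,k)$, $i\neq k$, has coefficients of both signs: there are distinct columns $j_1,j_2$ with $S_{kj_1},S_{kj_2}<0$ while $S_{ij_1}$ and $S_{ij_2}$ are nonzero of opposite sign. The four entries $S_{kj_1},S_{kj_2},S_{ij_1},S_{ij_2}$ trace out the cycle $C_{j_1}-R_k-C_{j_2}-R_i-C_{j_1}$ of length four in $G(S)$; its two edges at $R_k$ are negative and its two edges at $R_i$ have opposite sign, so it carries exactly three negative edges. Contrapositively, if $G(S)$ has no cycle of length four with three negative edges, every entry has uniform-sign coefficients and hence constant sign, and $Sv'(x)$ respects one fixed sign pattern for all $x$.

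The \emph{converse} begins with the same dictionary read backwards: in a length-four cycle with three negative edges the single positive edge meets a unique row $R_i$, whence the opposite row $R_k$ of the cycle is incident to two negative edges. Taking these as above exhibits the entry $(i,k)$ in the form $f'_{ik}(x)=S_{ij_1}V_{j_1k}(x)+S_{ij_2}V_{j_2k}(x)+(\text{remaining terms})$ with the two displayed terms of opposite sign at every point of the orthant. If this entry already changes sign for $v$ itself I would take $\tilde S=S$; otherwise I would perturb the two cycle coefficients $S_{ij_1},S_{ij_2}$ within their signs (so that $v'$ still satisfies the reaction-form relations for $\tilde S$) to tip the balance between the two competing terms and force the entry across zero on part of $\RR^n_{>0}$, producing $\tilde S$ arbitrarily close to $S$ for which $\tilde S v'$ respects no single sign pattern.

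The main obstacle is exactly this last realization step. Opposite-sign coefficients guarantee sign indefiniteness only \emph{formally}; for a particular flux the two competing terms might conspire to keep the entry of one sign, the extreme case being $V_{j_1k}$ and $V_{j_2k}$ proportional (as for affine fluxes, where the entry reduces to a nonzero constant). The clause ``some $\tilde S$ arbitrarily close to $S$, possibly $S$ itself'' is designed to absorb such accidental cancellations, and the delicate point is to show that an arbitrarily small change in the magnitudes of the cycle entries of $S$ always destroys any accidental sign-definiteness; the two combinatorial translations above are routine bookkeeping by comparison.
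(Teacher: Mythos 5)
Your sufficiency half is correct and is essentially the paper's own argument: the paper deduces the corollary from Theorem~\ref{prop:JacSigns}, whose proof writes $S=S_+-S_-$ and observes that the $(i,k)$ entry of $SU$ fails to have a sign pattern precisely when both $(S_+U)_{ik}\ne 0$ and $(S_-U)_{ik}\ne 0$ --- your ``coefficients of both signs'' condition --- and your four-cycle dictionary (one row with two negative edges, one row with one edge of each sign) is exactly the paper's list of forbidden $2\times 2$ patterns in \eqref{eq:2cycleB}. So for that direction you and the paper coincide, down to the bookkeeping.

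The gap is the converse realization step, which you flagged yourself, and it is worse than ``delicate'': the statement you propose to prove --- that an arbitrarily small change in the magnitudes of the cycle entries of $S$ always destroys accidental sign-definiteness for the \emph{given} flux $v$ --- is false. Take $S=\left[\begin{smallmatrix}1&-1\\-1&-1\end{smallmatrix}\right]$, whose graph contains a four-cycle with three negative edges, with the reaction-form, monotone increasing flux $v_1(x)=x_2$, $v_2(x)=x_1+x_2$. Then $v'(x)=\left[\begin{smallmatrix}0&1\\1&1\end{smallmatrix}\right]$ is a \emph{constant} matrix, so $\tilde S v'(x)$ is a constant matrix for every $\tilde S$, near $S$ or not, and trivially respects a single sign pattern on the whole orthant; no perturbation of $S$ (not even one introducing new nonzero entries) can ``tip the balance,'' because there is no $x$-dependence left to exploit. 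So your plan for the converse cannot be completed with $v$ held fixed. The paper's converse lives at a different level: Theorem~\ref{prop:JacSigns}.(3) is a statement about the symbolic flux pattern $U$, whose entries $U_{ij}$ are \emph{free variables}, so (as the paper's proof of part (1) notes) no cancellation can occur, and opposite-sign coefficients in an entry of $SU$ immediately mean $SU$ admits no sign pattern. The corollary's converse should accordingly be read as failure for some admissible flux realizing generic values of $V$ --- e.g.\ power-law fluxes, for which the ratio $V_{j_1k}/V_{j_2k}$ sweeps an interval and the entry genuinely changes sign --- rather than for the particular $v$ given; the hedge ``some $\tilde S$ arbitrarily close to $S$, possibly $S$ itself'' does not rescue the fixed-$v$ reading, and the paper supplies no analytic perturbation argument because its proof is purely symbolic. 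The missing idea in your write-up, then, is to quantify over (or perturb) the flux, not just the stoichiometric matrix.
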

The corollary is an immediate consequence of Theorem
\ref{prop:JacSigns} which operates at a higher level
of generality and requires the  definition
we now introduce.

Here and in the sequel, $U$ will denote the {\bf flux pattern}
assigned to $S$. It is a $d'\times d$ matrix with each entry
being $0$ or a free variable $U_{ij}$; the $(i,j)$th entry
of $U$ is 0 iff $S_{ji}\geq 0$.
 In case the differential equation
\eqref{eq:RF} satisfies RF and the flux vector $v(x)$
is monotone increasing, $U$ is the sign pattern of $V(x)$.

\begin{theorem}
\label{prop:JacSigns}
Let $S$ be a real $d\times d'$ matrix and $U$ the
corresponding flux pattern.
\ben
\item[\rm (1)]
The differential equation \eqref{eq:RF} has wRF iff
each diagonal term in $SU$ is a negative linear combination
of monomials in $U_{ij}$.
\item[\rm (2)]
$SU$ of a wRF differential equation
admits a sign pattern $($that is,
each entry of $SU$ is a positive or negative linear combination
of monomials in $U_{ij})$
whenever the matrix $S$ does not contain a $2\times 2$
submatrix with the same sign pattern as
\begin{equation}\label{eq:2cycle}
\left[\begin{array}{rr}+1&-1  \\-1,0&-1,0\end{array}\right]
\quad\text{or}\quad
\left[\begin{array}{rr}-1&+1  \\-1,0 &-1,0\end{array}\right]
\quad\text{or}\quad
\left[\begin{array}{rr}-1,0&-1,0\\+1&-1\end{array}\right]
\quad\text{or}\quad
\left[\begin{array}{rr}-1,0&-1,0 \\ -1&+1\end{array}\right].
\end{equation}
Here $-1,0$ stands for either $-1$ or $0$.
\item[\rm (3)]
SU of a RF differential equation admits a sign pattern iff
the matrix $S$ does not contain a $2\times 2$
submatrix with the same sign pattern as
\begin{equation}\label{eq:2cycleB}
\left[\begin{array}{rr}+1&-1\\-1&-1\end{array}\right]
\quad\text{or}\quad
\left[\begin{array}{rr}-1&+1\\-1&-1\end{array}\right]
\quad\text{or}\quad
\left[\begin{array}{rr}-1&-1\\+1&-1\end{array}\right]
\quad\text{or}\quad
\left[\begin{array}{rr}-1&-1\\-1&+1\end{array}\right].
\end{equation}
Equivalently, in terms of the bipartite graph,
$G(S)$ does not contain a cycle of length four with
three negative edges.
\item[\rm (4)]
The entry $(SU)_{ij}$ is nonzero iff there is some $k$ with
$S_{ik}\ne 0$ and $S_{jk}<0$.
If $SU$ admits a sign pattern, then
$\sign( (SU)_{ij})=\sign (S_{ik})$.
\een
\end{theorem}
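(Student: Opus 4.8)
The plan is to reduce all four assertions to the single symbolic identity
$$
(SU)_{ij} = \sum_{k=1}^{d'} S_{ik}\,U_{kj} = \sum_{k\,:\,S_{jk}<0} S_{ik}\,U_{kj},
$$
whose second equality is the defining property of the flux pattern, $U_{kj}\neq 0 \iff S_{jk}<0$. Regarding the free variables $U_{kj}$ as positive quantities, every surviving monomial $S_{ik}U_{kj}$ carries the sign of its coefficient $S_{ik}$. Hence $(SU)_{ij}$ is a sign-definite (``positive or negative'') linear combination of monomials precisely when all the coefficients $S_{ik}$ that actually occur in it share one sign, and in that case $\sign((SU)_{ij})$ is that common sign. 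Every claim below is read off from this observation.

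For (1) I would set $j=i$. In the diagonal entry $(SU)_{ii}=\sum_k S_{ik}U_{ki}$ a monomial $U_{ki}$ occurs with a positive coefficient iff $S_{ik}>0$ and $U_{ki}\neq 0$; thus $(SU)_{ii}$ is a negative linear combination iff no $k$ has both $S_{ik}>0$ and $U_{ki}\neq 0$. Letting $i$ range over all rows, this is exactly the condition $S_{ik}>0\Rightarrow U_{ki}=0$, i.e.\ wRF, giving the equivalence.

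For (2) and (3) I would examine an off-diagonal entry. The combination $(SU)_{ij}=\sum_{k:S_{jk}<0}S_{ik}U_{kj}$ fails to be sign-definite precisely when it contains two monomials of opposite sign, i.e.\ when there are columns $k,k'$ with $S_{jk},S_{jk'}<0$ and $S_{ik}>0>S_{ik'}$. The $2\times 2$ block of $S$ on rows $i,j$ and columns $k,k'$ is then one of the patterns in \eqref{eq:2cycleB} (the positive entry landing in any of the four slots), which in $G(S)$ is exactly a four-cycle $R_i-C_k-R_j-C_{k'}-R_i$ with three negative edges and one positive edge. Under RF the support of $V$ is pinned down, $V_{kj}\neq 0 \iff S_{jk}<0$, so such a block is present iff the sign pattern fails; this gives the equivalence in (3) and its graph reformulation. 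Under wRF the support is only constrained by $V_{kj}\neq 0\Rightarrow S_{jk}\leq 0$, so the offending columns need only satisfy $S_{jk},S_{jk'}\leq 0$; avoiding the weakened blocks \eqref{eq:2cycle} (bottom-row entries $-1,0$) is thus \emph{sufficient} to force a sign pattern for every admissible $V$ but not necessary, which is why (2) is one-directional while (3) is an equivalence.

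Finally, (4) is immediate from the displayed sum: $(SU)_{ij}\neq 0$ iff some monomial survives, i.e.\ some $k$ has $S_{ik}\neq 0$ and $S_{jk}<0$; and when $SU$ admits a sign pattern the common-sign observation of the first paragraph forces $\sign((SU)_{ij})=\sign(S_{ik})$ for every such $k$. The main obstacle is not any single computation but the bookkeeping in (2)--(3): one must verify that the four displayed blocks (equivalently, the four-cycles with three negative edges) genuinely exhaust all opposite-sign configurations, and keep the wRF/RF distinction straight, since it is precisely the extra freedom of $V$ at positions where $S=0$ that both widens the forbidden block and downgrades the wRF statement from an equivalence to a sufficient condition.
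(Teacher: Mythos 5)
Your proposal is correct and takes essentially the same route as the paper's proof: both expand $(SU)_{ij}=\sum_k S_{ik}U_{kj}$, use the freeness of the variables $U_{kj}$ to rule out cancellation, read off (1) from the diagonal entries, and identify failure of sign-definiteness at an off-diagonal entry with two columns producing one of the forbidden $2\times 2$ blocks. The only cosmetic difference is that the paper organizes the sign bookkeeping through the decomposition $S=S_+-S_-$ while you track coefficient signs directly, and you make the wRF-versus-RF support distinction (the reason (2) is only an implication while (3) is an equivalence) somewhat more explicit than the paper does.
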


\begin{proof}
(1)
Write $S=S_+-S_-$ for real matrices $S_+$, $S_-$ with nonnegative
coefficients satisfying the complimentarity property
$(S_+)_{ij} (S_-)_{ij}=0$. Diagonal entries of $SU$ are of the form
$\sum_{j} S_{ij}U_{ji}$ which meets the negative coefficient
condition iff $\sum_{j} (S_+)_{ij}U_{ji} = 0$ iff $(S_+)_{ij}U_{ji}=0$
for all $i,j$.
This uses that the $U_{ij}$ are free variables, so no
cancellation can occur.
Thus $(S)_{ij}>0$ iff $(S_+)_{ij} \neq 0$ implies $U_{ji}=0$
which is the wRF condition.

(2)
The $(i,j)$th entry of $SU$ does not have a
sign pattern iff $(S_+U)_{ij}\ne 0$ and $(S_-U)_{ij}\ne 0$.
$(S_+U)_{ij}=\sum_k (S_+)_{ik} U_{kj}$, so
$(S_+U)_{ij}\ne 0$ iff for some $k$, $(S)_{ik} > 0$ and
$U_{kj}\ne 0$,
i.e., $(S)_{ik} > 0$ and by wRF $S_{jk}\not >0$.
Similarly,
$(S_-U)_{ij}\ne 0$ iff there is some $\ell$ with
$(S_-)_{i\ell} \ne 0$ and  $U_{\ell j} \neq 0 $
so $S_{j\ell}\not >0$.
Taken together this implies that the $2\times 2$ submatrix
of $S$
given by rows $i,j$ and columns $k,\ell$ has the same sign
pattern as
one of the matrices in \eqref{eq:2cycle}.

(3)
This follows as in (2) by using that
if RF holds, then $U_{kj}\neq 0$ iff $S_{jk}<0$.
Also $U_{\ell j} \neq 0 $ iff $S_{j\ell}<0$.

(4) From $(SU)_{ij}=\sum_k S_{ik}U_{kj}$ it follows that
$(SU)_{ij}\ne 0$ iff there is $k$ with $S_{ik}\ne 0$ and
$U_{kj}\ne 0$. Due to the construction of $U$, $U_{kj}\ne 0$
iff $S_{jk}<0$. This proves the first part of the statement and
the second follows immediately since $U_{kj}$ is positive.
\end{proof}

Our next step is to introduce several
different types of determinant expansions.

\subsection{The core and other determinant expansions}
\label{subsec:coreDet}

For $S\in\RR^{d\times d'}$ with
$r:={\rm rank}(S)$
we define the {\bf core determinant} to be
\beq
\cd(S):=  \lim_{t\to 0} \frac{1}{t^{d-r}} \det ( SU  - t I).
\eeq

Let $B$ be a matrix whose range
is the orthogonal complement of the range of $S$.
We also use the formula
\beq
{\rm c}_0{\rm d}(S) :=\frac{ \det ( S U - B B^T)}{ \det (B B^T|_{(\ran S)^{\perp}})},
 \eeq
which does not depend on which $B$ we select and equals $\cd(S)$
(see Proposition
\ref{prop:BinetCauhr-ranks}), so we call both
the core determinant.
The {\bf Craciun-Feinberg determinant expansion} \cite{CF05}
is defined to be
$$
\cfd(S):= \det ( SU  - t I) \text{ with } t \text{ fixed, e.g.~}t= 1.
$$

For more details on the relationship between the Craciun-Feinberg
determinant expansion $\cfd(S)$ and the core determinant $\cd(S)$
we refer the reader to \S \ref{subsec:coreCF}.

\begin{prop}
\label{prop:BinetCauhr-ranks}
Let $C\in\RR^{d\times d'}$, let $D$ be a $d'\times d$ matrix with
possibly symbolic entries and suppose
$C$ has rank $r$. Define
$$
\alpha:=\frac{ \det ( CD - B B^T)}{ \det (B B^T|_{(\ran C)^{\perp}})},
$$
where $B$ is a matrix whose range is the orthogonal complement of
the range of $C$. Then
\ben
\item[\rm (1)]
$\alpha$ is independent of which matrix $B$ whose range
is the orthogonal complement of the range of $C$
is used to define it.
\item[\rm (2)]
$\alpha$ is the the determinant of the compression of $CD$
to the range of $C$.
\item[\rm (3)]
$$\alpha=\lim_{t\to 0} \frac{1}{t^{d-r}} \det ( CD  - t I).$$
\een
\end{prop}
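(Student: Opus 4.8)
The plan is to recognize $\alpha$ as a determinant computed on the range of $C$, prove the three assertions in the order (2), then (1), then (3), since the geometric interpretation in (2) is the conceptual heart and immediately yields the basis-independence in (1). First I would set up coordinates: let $P$ be the orthogonal projection onto $\ran C$, let $Q = I - P$ be the projection onto $(\ran C)^\perp$, and let $B$ be any matrix whose columns span $(\ran C)^\perp$. The defining ratio has $BB^T$ in the numerator, which acts as zero on $\ran C$ and as the invertible form $BB^T|_{(\ran C)^\perp}$ on the complement. The key algebraic observation is that $CD$ maps into $\ran C$ (its columns are combinations of columns of $C$), so on the orthogonal decomposition $\RR^d = \ran C \oplus (\ran C)^\perp$ the operator $CD - BB^T$ is block lower-triangular with diagonal blocks $P\,CD\,P|_{\ran C}$ and $-BB^T|_{(\ran C)^\perp}$.

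To make this precise I would choose an orthonormal basis adapted to the splitting, say $W = [W_1 \mid W_2]$ with the columns of $W_1$ an orthonormal basis of $\ran C$ and those of $W_2$ an orthonormal basis of $(\ran C)^\perp$. Conjugating by the orthogonal matrix $W$ preserves determinants, and in this basis $CD - BB^T$ becomes
\begin{equation}
\begin{bmatrix} W_1^T CD\, W_1 & W_1^T CD\, W_2 \\ 0 & -W_2^T BB^T W_2 \end{bmatrix},
\end{equation}
where the lower-left block vanishes precisely because $W_2^T CD = 0$. Taking determinants of this block-triangular matrix gives
\begin{equation}
\det(CD - BB^T) = \det\!\big(W_1^T CD\, W_1\big)\cdot(-1)^{d-r}\det\!\big(W_2^T BB^T W_2\big),
\end{equation}
and since $\det(W_2^T BB^T W_2) = \det(BB^T|_{(\ran C)^\perp})$ up to the same sign convention, the ratio $\alpha$ collapses to $\det(W_1^T CD\, W_1)$, which is exactly the determinant of the compression of $CD$ to $\ran C$. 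This proves (2). Claim (1) is then immediate: the compression $W_1^T CD\, W_1$ does not mention $B$ at all, so $\alpha$ cannot depend on the choice of $B$ (different orthonormal bases $W_1$ of $\ran C$ change it only by conjugation, which leaves the determinant fixed).

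For (3) I would compute $\det(CD - tI)$ in the same adapted basis. Writing $M := W^T(CD)W$, which is block upper-triangular with lower-left block zero, one gets $W^T(CD - tI)W = M - tI$, whose determinant factors as $\det(W_1^T CD\, W_1 - tI_r)\cdot\det(-tI_{d-r} + \text{lower block})$; the off-diagonal coupling does not affect the determinant of a block-triangular matrix. The bottom block is $-tI_{d-r}$, contributing $(-t)^{d-r}$, so $\det(CD - tI) = (-t)^{d-r}\det(W_1^T CD\, W_1 - tI_r)$. Dividing by $t^{d-r}$ and letting $t\to 0$ sends $\det(W_1^T CD\, W_1 - tI_r)\to\det(W_1^T CD\, W_1) = \alpha$ and leaves the prefactor $(-1)^{d-r}$; I would absorb this sign by fixing the orientation conventions consistently in (2) and (3) so the two match. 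The one step demanding genuine care is exactly this bookkeeping of signs and of the factor $\det(BB^T|_{(\ran C)^\perp})$: I must verify that the numerator's $(-1)^{d-r}$ and the denominator's positive normalization cancel so that (2) and (3) yield the same value without a spurious sign, and that the limit in (3) produces $(-t)^{d-r}$ rather than $t^{d-r}$, matching the claimed formula. That sign reconciliation, rather than any hard analysis, is where the proof can quietly go wrong.
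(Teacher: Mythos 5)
Your proposal is correct and takes essentially the same route as the paper's proof: work in a basis adapted to $\RR^d = \ran C \oplus (\ran C)^{\perp}$, observe that $CD-BB^T$ and $CD-tI$ are block triangular there (since $CD$ maps into $\ran C$), and factor the determinants. The $(-1)^{d-r}$ sign you flag at the end is a genuine subtlety that the paper's own proof silently elides (it writes $BB^T|_{(\ran C)^\perp}$ and $t^{d-r}$ for what are really $-BB^T|_{(\ran C)^\perp}$ and $(-t)^{d-r}$ blocks); since the same factor appears in both the $BB^T$ and $tI$ computations, parts (1) and (3) hold exactly as stated, while (2) is literally true only up to $(-1)^{d-r}$ — so your careful bookkeeping is the more accurate account.
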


\begin{proof}
We consider all matrices in the basis $\ran C \perp (\ran C)^{\perp}$. Then
$$CD=\left[\begin{array}{cc}CD|_{\ran C}&*\\0&0\end{array}\right] \text{ and }
BB^{T}=\left[\begin{array}{cc}0&*\\0&BB^{T}|_{(\ran C)^{\perp}}\end{array}\right].$$
Hence
$$CD-BB^{T}=\left[\begin{array}{cc}CD|_{\ran C}&*\\0&BB^{T}|_{(\ran C)^{\perp}}\end{array}\right].$$
This implies $\alpha=\det( CD|_{\ran S} )$ and is thus independent of $B$.
For (3), observe that
$$
CD  - t I =\left[\begin{array}{cc}(CD-tI)|_{\ran C}&*\\0&-t I|_{(\ran C)^{\perp}}\end{array}\right].$$
As the size of the second diagonal block is $(d-r)\times (d-r)$,
$$
\frac{1}{t^{d-r}} \det ( CD  - t I) = \det ((CD-tI)|_{\ran C}).
$$
Sending $t\to 0$ yields the desired conclusion.
\end{proof}

\begin{definition}
A column $c$ in $S\in\RR^{d\times d'}$ is called {\bf reversible} if
$-c$ is also a column of $S$.
(Many matrices coming from chemical reactions have
reversible columns.)
We call $-c$ the {\bf reverse} of $c$.
\end{definition}

\begin{exam}
\label{ex:CF3}
Let us consider an example which is a slight
modification of \cite[Table 1.1.(i)]{CF05}:
\nopagebreak\[
\xymatrix{
\begin{xy}
*+{ \txt{R5} }* \frm{o}
\end{xy}
\ar@{{}--{}}[r]
& \fbox{C3,C4} \ar@{{}-{}}[r]
&\begin{xy}
*+{\txt{R3} }* \frm{o}
\end{xy}  \\
&\begin{xy}
*+{\txt{R2}}* \frm{o}
\end{xy}
\ar@{{}-{}}[u]
& \ar@{{}--{}}[u] \fbox{C5}\\
\begin{xy}
*+{ \txt{R4} }* \frm{o}
\end{xy}  \ar@{{}--{}}[r]
& \ar@{{}-{}}[u] \fbox{C1,C2} \ar@{{}-{}}[r] &
\begin{xy}
*+{\txt{R1}}* \frm{o}
\end{xy}
\ar@{{}-{}}[u]
}
\]

\noindent The corresponding stoichiometric matrix $S$ and the vector
$v(x)$ are given by the following:
$$S=\left[
\begin{array}{rrrrrr}
 a_{11} & -a_{11} & 0 & 0 & -a_{13}\\
 a_{21} & -a_{21} & a_{22} & -a_{22} & 0  \\
 0 & 0 & a_{32} & -a_{32} & a_{33}  \\
 -a_{41} & a_{41} & 0 & 0 & 0  \\
 0 & 0 & -a_{52} & a_{52} & 0
\end{array}
\right], \qquad
v(x)=\left[
\begin{array}{l}
k_{1} x_{4}^{a_{41}} \\ k_{2} x_{1}^{a_{11}}
   x_{2}^{a_{21}}\\ k_{3} x_{5}^{a_{52}} \\ k_{4}
   x_{2}^{a_{22}} x_{3}^{a_{32}}\\k_{5}
   x_{1}^{a_{13}}
\end{array}
   \right].
$$
Note some of the columns of $S$ are reversible.
This phenomenon is captured in the graph
by listing two columns that are reverses of each other
in a common rectangular box.
For example, C3 and C4 appear in the same box
and in fact columns 3 and 4 are reverses of each
other. Sign of $S_{33}$ is the same as the sign
of $S_{23}$ and both appear in the graph as a solid line.
$S_{53}$ has sign opposite to these and so appears
in the graph as a dashed line.
This is also true for C4. Other dashed vs.~solid lines
of the graph coming from a box with reversible columns
follow the same pattern.

The corresponding $V(x)$
and $U$ are as follows:
$$
V(x)=\left[\begin{array}{ccccc}
 0 & 0 & 0 & x_{4}^{a_{41}-1} a_{41} k_{1} & 0 \\
 x_{1}^{a_{11}-1} x_{2}^{a_{21}} a_{11} k_{2} &
   x_{1}^{a_{11}} x_{2}^{a_{21}-1} a_{21} k_{2} & 0 & 0
   & 0 \\
 0 & 0 & 0 & 0 & x_{5}^{a_{52}-1} a_{52} k_{3} \\
 0 & x_{2}^{a_{22}-1} x_{3}^{a_{32}} a_{22} k_{4} &
   x_{2}^{a_{22}} x_{3}^{a_{32}-1} a_{32} k_{4} & 0 & 0
   \\
 x_{1}^{a_{13}-1} a_{13} k_{5} & 0 & 0 & 0 & 0
\end{array}\right],
$$
$$
U=\left[\begin{array}{ccccc}
 0 & 0 & 0 & U_{14}&0 \\
 U_{21}& U_{22}
 & 0 & 0
   & 0 \\
 0 & 0 & 0 & 0 & U_{35}\\
 0 & U_{42} &
   U_{43} & 0 & 0
   \\
 U_{51} & 0 & 0 & 0 & 0
\end{array}\right].
$$
For generic choices of the numbers $a_{ij}$ the matrix
 $S$
  will be of rank $3$ and this is
what
we
focus on.
A straightforward computation gives
\begin{eqnarray*}
\cd(S)&=&
-2 a_{13} a_{41} a_{52} U_{14} U_{35} U_{51}-2
   a_{13} a_{21} a_{52} U_{22} U_{35} U_{51}-2
   a_{13} a_{22} a_{41} U_{14} U_{42} U_{51}\nonumber\\
&& -2 a_{13} a_{32} a_{41} U_{14} U_{43} U_{51}
-2 a_{13} a_{21} a_{32} U_{22} U_{43} U_{51}+2
   a_{11} a_{22} a_{33} U_{22} U_{43} U_{51}\nonumber
\end{eqnarray*}
Hence there is potentially one anomalous sign in $\cd(S)$. However,
\begin{eqnarray*}
-2 a_{13} a_{21} a_{32} U_{22} U_{43} U_{51}+2
   a_{11} a_{22} a_{33} U_{22} U_{43} U_{51} &=&
2 (a_{11} a_{22} a_{33} -  a_{13} a_{21} a_{32}) U_{22} U_{43} U_{51}
\end{eqnarray*}
so $\cd(S)$ has one, respectively no anomalous sign, depending on
whether $a_{11} a_{22} a_{33} -  a_{13} a_{21} a_{32}$ is positive,
respectively nonpositive.
\qed
\end{exam}

\begin{exam}
Suppose $S=\left[\begin{array}{rr}
-a_{11}&a_{11}\\
-a_{21}&a_{21}
\end{array}\right]$. Then $SU-I$ admits a sign pattern; it is
a $2\times 2$ matrix with all entries negative. Hence the determinant
expansion of its sign pattern has one anomalous sign by the det
sign test. However,
$\cfd(S)$ and $\cd(S)$ have no anomalous signs.
\qed
\end{exam}

\subsection{Formulas for determinants of products of matrices}
\label{subsec:BinetCauchy}

For a matrix $A$, $A(\alpha|\delta)$ will refer to the submatrix
of $A$ with rows indexed by $\alpha$ and columns indexed by $\delta$.

\def\de{\delta}
\def\al{\alpha}
\def\all{\rm all}

Recall the
{\bf Binet-Cauchy formula} for the determinant of the product $AB$
of a $m \times n$ matrix $A$ and a $n\times m$ matrix $B$:
\beq
\label{eq:Binet-Cauchy}
\det( AB) = \sum_{\dd{\delta\subseteq\{1,\ldots,n\}}{|\de|=m}}
\det( A(\all|\de))\ \det (B(\de|\all)).
\eeq
(If $m > n$, then there is no admissible set $\de$ and the determinant $\det(AB)$ is zero.)

Combining Proposition \ref{prop:BinetCauhr-ranks}
with the Binet-Cauchy formula we obtain

\begin{lemma}
\label{lem:rankdet}
For $S\in\RR^{d\times d'}$ having rank $r$,
the core determinant is given by
\beq
\label{eq:rankdet}
 \cd(S)  = (-1)^{d-r}
\sum_{|\al|,|\beta|= r } \det (S (\al|\beta)) \  \det (U(\beta|\al))
.
\eeq
\end{lemma}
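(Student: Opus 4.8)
The plan is to begin from the defining limit formula for the core determinant,
$$\cd(S) = \lim_{t\to 0} \frac{1}{t^{d-r}} \det(SU - tI),$$
whose equality with the other two expressions is supplied by Proposition \ref{prop:BinetCauhr-ranks}. Writing $M := SU$, a $d\times d$ matrix, I would first expand $\det(M - tI)$ as a polynomial in $t$ using the standard characteristic-polynomial identity
$$\det(M - tI) = (-1)^d \sum_{k=0}^{d} (-1)^k E_k(M)\, t^{d-k},$$
where $E_k(M)$ is the sum of all $k\times k$ principal minors of $M$ and $E_0(M) = 1$. This identity I would simply quote.

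The decisive input is the rank hypothesis. Since $S$ has rank $r$, the product $M = SU$ has rank at most $r$, so every principal minor of size exceeding $r$ vanishes and hence $E_k(M) = 0$ for all $k > r$. Consequently the lowest power of $t$ surviving in the expansion is $t^{d-r}$; dividing by $t^{d-r}$ and sending $t \to 0$ kills every term except the one with $k = r$, yielding
$$\cd(S) = (-1)^d(-1)^r E_r(SU) = (-1)^{d-r} E_r(SU),$$
where I have used $(-1)^{d+r} = (-1)^{d-r}$.

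It then remains only to rewrite $E_r(SU)$. A generic $r\times r$ principal minor is $\det\big((SU)(\gamma|\gamma)\big)$ for an $r$-subset $\gamma \subseteq \{1,\ldots,d\}$, and one checks directly that $(SU)(\gamma|\gamma) = S(\gamma|\all)\,U(\all|\gamma)$, the product of an $r\times d'$ matrix and a $d'\times r$ matrix. Applying the Binet-Cauchy formula \eqref{eq:Binet-Cauchy} to this product gives
$$\det\big((SU)(\gamma|\gamma)\big) = \sum_{|\beta|=r} \det\big(S(\gamma|\beta)\big)\,\det\big(U(\beta|\gamma)\big),$$
with $\beta$ ranging over the $r$-subsets of $\{1,\ldots,d'\}$. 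Summing over all such $\gamma$ (renamed $\alpha$) and substituting into the displayed value of $\cd(S)$ reproduces exactly \eqref{eq:rankdet}.

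I do not expect any serious obstacle here: the argument is an application of Proposition \ref{prop:BinetCauhr-ranks} followed by Binet-Cauchy. The one point requiring care is the bookkeeping at the limit, namely verifying that the rank bound $E_k(SU) = 0$ for $k > r$ makes $t^{d-r}$ the genuine lowest-order term so that precisely the $E_r(SU)$ coefficient is isolated; this, together with keeping the sign $(-1)^{d-r}$ straight, is the crux of the computation.
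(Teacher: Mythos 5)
Your proof is correct, but it is organized differently from the paper's. The paper applies Binet--Cauchy exactly once, to the block factorization $SU-tI=\left[\begin{array}{cc}S&-tI\end{array}\right]\left[\begin{array}{c}U\\ I\end{array}\right]$, obtaining the expansion \eqref{eq:keyDetExp} over index sets $\de$ of size $d$; the rank hypothesis then forces every contributing $\de$ of lowest $t$-order to select exactly $r$ columns of $S$ and $d-r$ columns of $-tI$, which yields the factor $(-1)^{d-r}t^{d-r}\det(S(\al|\beta))$ and the bijection $(\al,\beta)\leftrightarrow\de$. You instead decouple the two steps: first the characteristic-polynomial identity $\det(SU-tI)=(-1)^d\sum_{k}(-1)^kE_k(SU)\,t^{d-k}$ isolates the coefficient of $t^{d-r}$ as $(-1)^{d-r}E_r(SU)$, and then Binet--Cauchy is applied separately to each principal minor $(SU)(\gamma|\gamma)=S(\gamma|\all)\,U(\all|\gamma)$. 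Both routes rest on the same two ingredients; the paper's single-shot version has the advantage that the intermediate formula \eqref{eq:keyDetExp} directly delivers the full expansion \eqref{eq:cfdet} of $\cfd(S)$ (though your expansion in the $E_s(SU)$, $s\le r$, would recover \eqref{eq:cfdet} just as easily), while yours quotes a standard identity and keeps the combinatorial bookkeeping lighter. One point you should make explicit: since $U$ has symbolic entries, ``rank $SU\le r$'' must be read over the rational function field $\RR(U_{ij})$; the cleanest justification that $E_k(SU)=0$ for $k>r$ is that any $k\times k$ submatrix of $SU$ equals $S(\gamma_1|\all)\,U(\all|\gamma_2)$, so by Binet--Cauchy its determinant is a sum of products $\det(S(\gamma_1|\de))\det(U(\de|\gamma_2))$ with $|\de|=k>r$, and every $k\times k$ minor of $S$ vanishes. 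With that sentence added, your argument is complete and the sign bookkeeping $(-1)^{d+r}=(-1)^{d-r}$ is right.
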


\begin{proof}
Use \eqref{eq:Binet-Cauchy} and
$
\left[
\begin{array}{cc}
 PQ &GH
\end{array}
\right]
=
\left[%
\begin{array}{cc}
  P & G \\
\end{array}%
\right]
\left[%
\begin{array}{c}
  Q \\
  H
\end{array}%
\right]
$
to get
\beq
\label{eq:keyDetExp}
 \det ( S U - t  I)  =
\sum_{|\de|=d } \det
\left(
\left[
\begin{array}{cc}
 S &-tI
\end{array}
\right]
(\all|\de)\right) \; \det \left( \left[\begin{array}{c}
U\\ I\end{array}\right](\de|\all)\right),
\eeq
where $d$ is the number of rows of $S$.
Since rank $S$ is $r$, $t^{d-r}$
factors out of $\det\left(\left[
\begin{array}{cc}
 S &-tI
\end{array}
\right](\all|\de)\right)
$, so
$
 \lim_{t\to 0} \frac{1}{t^{d-r}} \det \left(\left[
\begin{array}{cc}
 S &-tI
\end{array}
\right](\all|\de)\right)
$
exists. Let us look
at terms of degree $d-r$ in $t$ in \eqref{eq:keyDetExp}.
$
 \det \left(\left[
\begin{array}{cc}
 S &-tI
\end{array}
\right](\all|\de)\right)
$
will be of degree $d-r$ in $t$ iff $\delta$ will consists
of exactly $r$ columns $\beta$ of $S$. If $\al$ denotes
the set of rows of $S$ that do not hit any of the
columns of $-tI$ chosen by $\beta$, then
$$
 \det \left(\left[
\begin{array}{cc}
 S &-tI
\end{array}
\right](\all|\de)\right)
=
(-1)^{d-r} t^{d-r}
\det (S (\al|\beta)).
$$
It is clear that such pairs $(\al,\beta)$ are in a
bijective correspondence with all $\de$ that pick
$r$ columns of $S$. Hence
$$
\det(SU-tI)=  (-t)^{d-r} 
\sum_{|\al|,|\beta|= r }
\det (S (\al|\beta))\  \det (U(\beta|\al)) + (\text{higher order
terms in }t).
$$
Dividing by $t^{d-r}$ and sending $t\to 0$ proves \eqref{eq:rankdet}.
\end{proof}

Formulas (\ref{eq:rankdet}) and \eqref{eq:keyDetExp} are in contrast to $\cfd(S)$ which is given by the more complicated
expression
\beq
\label{eq:cfdet}
\cfd(S)  =
\sum_{s=1}^r \sum_{|\al|=|\beta|=s}
(-t)^{d-s} \det (S (\al|\beta)) \  \det (U(\beta|\al))
\eeq
The fact is known (cf.~\cite{CF05}, \cite[proof of Theorem 4.4]{BDB07})
and its proof follows the line
of the proof of Lemma \ref{lem:rankdet}.

For the chemical interpretation of the core determinant
vs.~the Craciun-Feinberg determinant see our \S \ref{subsec:coreCF}.

\begin{lemma}
\label{lem:reduceBadSigns}
The number of anomalous signs in $\cd(S)$ is at most the
number of anomalous signs in $\cfd(S)$.
\end{lemma}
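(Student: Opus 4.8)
The plan is to identify $\cd(S)$ with the top-degree homogeneous part, in the free variables $U_{ij}$, of $\cfd(S)$, and then to observe that passing to a fixed-degree part of a polynomial can only decrease the minority-sign count. Throughout I would fix $t=1$ (any fixed positive $t$ works equally well, as it merely rescales each homogeneous piece by a positive factor) and regard both $\cfd(S)$ and $\cd(S)$ as polynomials in the $U_{ij}$, with real coefficients supplied by the minors $\det(S(\alpha|\beta))$. The key observation is that for $|\alpha|=|\beta|=s$ the factor $\det(U(\beta|\alpha))$ is \emph{homogeneous of degree $s$} in the variables $U_{ij}$: every term of its Laplace expansion is a product of exactly $s$ distinct entries of $U$. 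Hence, reading off \eqref{eq:cfdet}, the $s$-th inner sum
\[
\sum_{|\alpha|=|\beta|=s}(-1)^{d-s}\det(S(\alpha|\beta))\,\det(U(\beta|\alpha))
\]
is precisely the degree-$s$ homogeneous component of $\cfd(S)$ as a polynomial in $U$. Since all minors of $S$ of size exceeding $r$ vanish, the largest degree occurring is $s=r$, and comparison with \eqref{eq:rankdet} shows that this top-degree component equals $\cd(S)$ exactly.

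Next I would exploit the fact that monomials of distinct $U$-degrees are distinct: collecting like terms in $\cfd(S)$ therefore never mixes contributions coming from different values of $s$. Consequently every monomial of $\cd(S)$ occurs in $\cfd(S)$ with an identical coefficient, and these are exactly the monomials of $\cfd(S)$ of top $U$-degree $r$. In other words, the set of terms of $\cd(S)$ is a sign-preserving \emph{subset} of the set of terms of $\cfd(S)$.

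The conclusion is then a counting step. Since the positive (respectively negative) terms of $\cd(S)$ form a subset of the positive (respectively negative) terms of $\cfd(S)$, we obtain $m_+(\cd(S))\le m_+(\cfd(S))$ and $m_-(\cd(S))\le m_-(\cfd(S))$. As $\min$ is monotone in each of its arguments,
\[
m(\cd(S))=\min\{m_+(\cd(S)),m_-(\cd(S))\}\le\min\{m_+(\cfd(S)),m_-(\cfd(S))\}=m(\cfd(S)),
\]
which is the assertion.

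The one point that requires care — and the only place the argument could go wrong — is the claim that there is no cancellation \emph{across} different degrees once $t$ is specialized to $1$: specializing $t$ collapses the explicit $t$-grading, so the separation of contributions must instead be seen through the $U$-degree. This is exactly what the homogeneity of $\det(U(\beta|\alpha))$ provides, so the main obstacle reduces to stating that homogeneity cleanly and noting that any cancellations \emph{within} a single degree $s$ (as in the combination of $a_{11}a_{22}a_{33}$ and $a_{13}a_{21}a_{32}$ in Example \ref{ex:CF3}) occur identically in $\cd(S)$ and in the degree-$r$ part of $\cfd(S)$, so that the signs of the surviving degree-$r$ terms genuinely agree.
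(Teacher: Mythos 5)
Your proof is correct and takes essentially the same approach as the paper: the paper's proof likewise compares \eqref{eq:rankdet} with \eqref{eq:cfdet}, noting that every term of $\cd(S)$ appears (times $t^{d-r}$) in $\cfd(S)$, that terms coming from $\cd(S)$ have degree $r$ in the $U_{ij}$ while all other terms of $\cfd(S)$ have degree $<r$, and hence that no cancellation occurs. Your explicit appeal to the homogeneity of $\det(U(\beta|\alpha))$ and the monotonicity of $\min$ merely spells out the degree-separation step the paper states in one line.
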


\begin{proof}
By looking at the formulas \eqref{eq:rankdet} and
\eqref{eq:cfdet} it is clear that each term appearing
in $\cd(S)$ also appears (multiplied with $t^{d-r}$) in $\cfd(S)$.
Terms $w$ from $\cd(S)$ have degree $r$ in the $U_{ij}$'s.
All terms in $\cfd(S)$ not coming from terms in $\cd(S)$
have degree $<r$ in the $U_{ij}$'s. Thus there is no
cancellation and the
statement follows.
\end{proof}

\begin{rem}
Example \ref{ex:tail} shows that the number of anomalous signs
in $\cd(S)$ can be strictly smaller than the number
of anomalous signs in $\cfd(S)$.

If $B$ is the sign pattern associated to the graph $G_1$ of Example \ref{ex:interlace},
and $S=\left[\begin{array}{cc}B&-B\end{array}\right]$,
then $\cd(S)$ has no anomalous signs, whereas $\cfd(S)$ has
one anomalous sign. We leave this as an exercise
for the interested reader.
\qed
\end{rem}

\subsection{Generic matrices and the reduced
$S$-matrix}

In this section we introduce some basic definitions
and illustrate them with an example.

\begin{definition}
A matrix $A$ is called {\bf weakly generic} if its
rank $r$ is maximal among all matrices with the same
sign pattern.
If, in addition, all $r\times r$ submatrices of $A$ are
weakly generic, then $A$ is called {\bf generic}.

The set of all (weakly) generic $m\times m$ matrices with a given sign
pattern is open and dense in the set of all $m\times m$ matrices
with that sign pattern.
\end{definition}

\begin{lem}
\label{lem:genericRank}
The rank $r$ of a generic matrix $A$ with connected
graph $G(A)$ is equal to the minimum of the number of rows
or of columns of $A$. If $G(A)$ has $\ell$ components
$G_1,\ldots,G_\ell$
and $r_i$ is the minimal number of column or row nodes in
$G_i$, then $r=\sum_{i=1}^\ell r_i$.
\end{lem}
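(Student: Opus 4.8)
My plan is to reduce ${\rm rank}(A)$ to a combinatorial invariant of $G(A)$ --- the size of a largest matching --- and then to read that invariant off from the shape of $G(A)$. Write $m$ and $n$ for the numbers of row- and column-nodes of $G(A)$.

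\emph{First I would identify the generic rank with the maximum matching number.} The largest rank attainable by a matrix in a fixed sign-pattern class is the largest $k$ for which some $k\times k$ submatrix $A'$ has a determinant expansion that does not vanish identically: the entries of $A'$ may be chosen freely subject only to their signs, and a nonzero polynomial cannot vanish on the whole open sign-orthant, so such an $A'$ can be made nonsingular. By Lemma \ref{lem:crucial} the expansion of $A'$ is not identically zero exactly when $G(A')$ has a perfect matching, that is, exactly when $G(A)$ has a matching of size $k$ (Remark \ref{rem:perfMatch}). Hence the maximal rank over the class equals the size $\mu$ of a largest matching of $G(A)$, and since $A$ is generic this maximum is attained, giving $r=\mu$.

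\emph{Next I would treat the connected case.} Assume $G(A)$ is connected and, transposing if necessary, that $m\le n$, so $\min(m,n)=m$. By the previous step it suffices to produce a matching of $G(A)$ saturating all $m$ row-nodes, and by Hall's theorem this is equivalent to the condition that $|N(T)|\ge|T|$ for every set $T$ of row-nodes, where $N(T)$ is the set of column-nodes adjacent to $T$. Establishing this Hall condition from connectedness is the crux, and the step I expect to be the main obstacle: it is the only point at which connectedness (rather than mere genericity) is invoked, so the entire weight of that hypothesis must be spent showing that $G(A)$ has no deficient row-set $T$. The intended route is to use a spanning tree, or the propagation of augmenting paths, to rule out such a $T$; care is needed here, since the whole content of part (1) is concentrated in this combinatorial claim.

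\emph{Finally I would deduce the general case from the connected one.} If $G(A)$ has components $G_1,\ldots,G_\ell$, then after permuting rows and columns $A$ is block diagonal with one block $A_i$ per component and $G(A_i)=G_i$. A matrix in the sign-pattern class of $A$ is exactly a block-diagonal matrix whose blocks lie in the classes of the $A_i$, so the maximal rank of $A$ is the sum of the maximal ranks of the blocks; hence $A$ generic forces each $A_i$ to be (weakly) generic, with connected graph $G_i$. Since ${\rm rank}(A)=\sum_i{\rm rank}(A_i)$, applying part (1) to each block gives ${\rm rank}(A_i)=r_i=\min(\#\text{rows of }G_i,\#\text{cols of }G_i)$, and therefore $r=\sum_{i=1}^\ell r_i$, as claimed.
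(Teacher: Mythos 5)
Your first and third steps are sound. Step 1 is the standard ``term rank'' argument: over a sign-pattern class the maximal achievable rank equals the maximum matching size of $G(A)$, since distinct permutations contribute distinct monomials to the determinant expansion (Lemma \ref{lem:crucial}, Remark \ref{rem:perfMatch}), there is no cancellation among free variables, and a nonzero polynomial cannot vanish on the whole open set of admissible entries. Step 3, the block-diagonal reduction over components, is likewise fine. For comparison, the paper's entire proof of this lemma is the single word ``Obvious,'' so it offers no argument at the point you identified as the crux.

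The gap you flagged in step 2 is real, and it cannot be closed: connectedness of a bipartite graph does not imply Hall's condition on the smaller vertex class, and the lemma as stated is in fact false. Take
$$
A=\left[\begin{array}{ccc} a & b & c\\ d & 0 & 0\\ e & 0 & 0\end{array}\right],
$$
with all of $a,b,c,d,e$ positive. Then $G(A)$ is connected (column $1$ meets every row) and $\min(\#\text{rows},\#\text{cols})=3$, yet every matrix in this class has rank at most $2$ because columns $2$ and $3$ are proportional; a generic member in the paper's sense (say $a=1$, $b=1$, $c=2$, $d=1$, $e=3$) has rank exactly $2$, matching the maximum matching size $2$ of $G(A)$ predicted by your step 1. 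Nor is the problem merely colinear columns (which an earlier remark of the paper removes): a $5\times 5$ pattern with rows $3,4,5$ supported only in columns $4,5$ with sign patterns $(+,+)$, $(+,-)$, $(+,0)$, columns $1,2,3$ supported only in rows $1,2$ with the same three patterns, and columns $4,5$ otherwise full, is connected with no two rows or columns colinear; still the row set $\{3,4,5\}$ has only the two column neighbours $\{4,5\}$, so the matching number, and hence the generic rank, is $4<5$. The correct, provable statement is exactly what your step 1 delivers: the generic rank equals the maximum matching number of $G(A)$, summed over components; it equals $\min(\#\text{rows},\#\text{cols})$ only under the additional hypothesis that $G(A)$ admits a matching saturating its smaller vertex class. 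Your instinct that the entire weight of the connectedness hypothesis would have to be spent on the Hall condition was exactly right --- the hypothesis is simply not strong enough to bear it, and any application of the lemma (e.g.\ in Proposition \ref{cor:1cycle}) needs the saturating-matching condition verified separately.
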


\begin{proof}
Obvious.
\end{proof}

\begin{definition}
For $S\in\RR^{d\times d'}$
let $S_\M$ denote a {\bf reduced $S$-matrix},
i.e., a matrix obtained from $S$ by removing one column
out of every pair of columns which are reverses of each other.
Clearly, $S_\M$ contains no reversible columns.
The {\bf reduced flux pattern} $U_\M $ is obtained from
$S$ and $S_\M$:
it is built from the sign pattern of $-S_\M^T$ by
setting all entries coming from positive entries
in columns nonreversible in $S$ to $0$.
In particular, if all columns of $S$ are reversible,
then $U_\M $ is the sign pattern of $-S_\M^T$.
If no column of $S$ is reversible, then $S_\M=S$ and
$U_\M =U$.
\end{definition}

\begin{exam}
\label{ex:CF3-again}
Let us revisit Example \ref{ex:CF3}.
A reduced $S$-matrix $S_\M$ and the reduced flux
pattern $U_\M $ are
$$S_\M=\left[
\begin{array}{rrrrrr}
 a_{11} &  0 & -a_{13}\\
 a_{21} &  a_{22} & 0  \\
 0 &  a_{32} &  a_{33}  \\
 -a_{41}  & 0 & 0  \\
 0 &  -a_{52} & 0
\end{array}
\right], \qquad
U_\M =\left[
\begin{array}{ccccc}
-U_{11}&-U_{12}&0&U_{14}&0\\
0&-U_{22}&-U_{23}&0&U_{25}\\
U_{31}&0&0&0&0
\end{array}\right].
$$
In most cases $S_\M$ will be generic and hence of rank $3$.
By a straightforward computation,
$$
S_\M U_\M  = \left[\begin{array}{ccccc}
-a_{11}U_{11}-a_{13}U_{31}&-a_{11}U_{12}&0&a_{11}U_{14}&0\\
-a_{21}U_{11}&-a_{21}U_{12}-a_{22}U_{22}&-a_{22}U_{23}&a_{21}U_{14}&a_{22} U_{25}\\
a_{33}U_{31}&-a_{32}U_{22}&-a_{32}U_{23}&0&a_{32} U_{25}\\
a_{41}U_{11}&a_{41}U_{21}&0&-a_{41}U_{14}&0\\
0&a_{52}U_{22}&a_{52}U_{23}&0&-a_{52} U_{25}
\end{array}\right]
$$
After a possible renaming of the free variables in $U_\M $,
$SU=S_\M U_\M $. This is the {\it key observation} we use in
the next sections in order to
count or estimate the number of anomalous signs in $\cd(S)$.
\qed
\end{exam}

\begin{lemma}\label{lem:key}
If $S$ is a real $d\times d'$ matrix,
if $S_\M$ is any reduced $S$-matrix
and $U_\M$ the corresponding reduced flux pattern,
then
$$
SU=S_\M U_\M $$
after a possible renaming of the free variables in $U_\M $.
\end{lemma}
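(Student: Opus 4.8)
The plan is to show that the matrix product $SU$ is unchanged when we delete one column from each reversible pair, provided we simultaneously adjust the flux pattern. The key algebraic fact driving everything is that the product $SU = \sum_k S_{ik} U_{kj}$ sums over the \emph{columns} $k$ of $S$ (equivalently, the rows $k$ of $U$). Each index $k$ contributes a rank-one piece $S_{\bullet k}\, U_{k \bullet}$ to the product. So my strategy is to track what happens to these rank-one contributions when a reversible pair $(c, -c)$ of columns is collapsed into a single column, and to verify that the definition of $U_\M$ is engineered precisely so that the two contributions from the pair combine into the single contribution from the surviving column.

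\textbf{Step 1: Reduce to a single reversible pair.} First I would observe that $S_\M$ is obtained from $S$ by finitely many deletions, one per reversible pair, and that the renaming of free variables can be carried out independently for each pair. Hence it suffices to prove the identity when $S$ has exactly one reversible pair, say columns $c$ and $c'=-c$, and $S_\M$ is obtained by deleting $c'$. The general case then follows by induction on the number of reversible pairs, since deleting one pair produces a matrix whose remaining reversible pairs are untouched.

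\textbf{Step 2: Compare the two rank-one contributions.} With a single pair, $(SU)_{ij} = S_{ic}U_{cj} + S_{ic'}U_{c'j} + (\text{terms from other columns})$. Since $S_{ic'} = -S_{ic}$, the two pair-terms equal $S_{ic}(U_{cj} - U_{c'j})$. The point is that the definition of $U_\M$ builds its rows from the sign pattern of $-S_\M^T$, zeroing out entries that come from positive (nonreversible) entries, and for a \emph{reversible} column the surviving row of $U_\M$ carries a free variable in exactly those positions where $U_{cj}$ or $U_{c'j}$ was nonzero. By Theorem~\ref{prop:JacSigns}(4), the nonzero pattern of $U_{cj}$ is dictated by the negative entries of row $c$ of $S$, while $U_{c'j}$ is dictated by the negative entries of row $c'=-c$, i.e.\ the positive entries of row $c$; these supports are disjoint. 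Therefore $U_{cj} - U_{c'j}$ already has distinct free variables in the two blocks of positions, and the row of $U_\M$ corresponding to the surviving column $c$ is exactly a relabeling of this combined pattern. So $S_{ic}(U_{cj}-U_{c'j})$ equals $(S_\M)_{ic}(U_\M)_{c j}$ after renaming, while the contributions from all non-reversible columns are literally unchanged.

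\textbf{Step 3: Verify the renaming is consistent.} The one point requiring care—\emph{and the main obstacle}—is that the renaming of free variables must be a single global bijection, consistent across all entries $(i,j)$ simultaneously, not an entry-by-entry relabeling. I would argue that because each free variable $U_{kj}$ appears only in row $k$ of $U$ and is multiplied only by the $k$-th column of $S$, the substitution is determined entirely at the level of the index $k$: collapsing the pair $(c,c')$ amounts to identifying the two rows $U_{c\bullet}$ and $U_{c'\bullet}$ of $U$ (which have disjoint supports by the previous step) into the single row $(U_\M)_{c\bullet}$. Since this identification respects the column index $j$ uniformly and leaves all other rows alone, it is a well-defined global renaming, and the identity $SU = S_\M U_\M$ holds entry-by-entry under it. The concrete instance of this bookkeeping is precisely what is displayed in Example~\ref{ex:CF3-again}, which I would cite as the worked-out model for the general argument.
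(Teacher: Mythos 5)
Your proof is correct in substance, and its core algebra coincides with the paper's: grouping the column sum $\sum_k S_{ik}U_{kj}$ by reversible pairs and using $S_{ic'}=-S_{ic}$ to get $S_{ic}(U_{cj}-U_{c'j})$ is exactly the entrywise form of the paper's block computation, which writes $U=\left[\begin{array}{c}U_0\\ U_1\end{array}\right]$ for $S=\left[\begin{array}{cc}S_\M&-S_\M\end{array}\right]$ and concludes $SU=S_\M(U_0-U_1)=S_\M U_\M$, with the disjointness of the supports of $U_0$ and $U_1$ (negative vs.\ positive entries of $S_\M$) guaranteeing that $U_0-U_1$ is a genuine relabeling of $U_\M$. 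Where you genuinely diverge is the general, not fully reversible, case: the paper \emph{expands} $S$ to $\tilde S=\left[\begin{array}{cc}S_\M&-S_\M\end{array}\right]$ by adjoining phantom reverse columns and inserting corresponding zero rows into $U$, whereas you \emph{collapse} directly, checking that rows of $U$ attached to nonreversible columns pass to $U_\M$ unchanged; this is valid, since for such a column the zeroing of positive-entry positions in the definition of $U_\M$ matches the flux-pattern zeros exactly, and your Step 3 on the renaming being one global bijection makes explicit a point the paper leaves implicit. Two caveats. First, the induction of your Step 1 is not well-founded as stated: after deleting $c'=-c$, the surviving column $c$ is no longer reversible in the intermediate matrix, so that matrix paired with \emph{its own} flux pattern is not an instance of the lemma (its flux pattern would kill the variables sitting at the positive entries of $c$); a literal induction would need a hybrid hypothesis allowing signed rows. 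Fortunately the induction is superfluous: your Step 2 computation is local to each pair and to each nonreversible column, so it handles all columns simultaneously in a single pass over $k$, and you should simply phrase it that way. Second, a small slip: the support of the row $U_{c\bullet}$ is dictated by the negative entries of \emph{column} $c$ of $S$ (rows of $U$ correspond to columns of $S$), not of ``row $c$,'' and the relevant fact $U_{kj}\neq 0\iff S_{jk}<0$ is just the definition of the flux pattern rather than a consequence of Theorem \ref{prop:JacSigns}(4).
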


\begin{proof}
Suppose first that $S=\left[\begin{array}{cc}S_\M&-S_\M\end{array}\right]$.
The corresponding matrix $U$ is of the form
\begin{equation}\label{eq:Ux}
U=\left[\begin{array}{c} U_0\\U_1 \end{array}\right].
\end{equation}
The sign pattern of $U_0^T$ is the same
as that of $-S_\M$. Furthermore, nonzero entries of $U_0^T$
coincide with negative entries of $S_\M$. Similarly,
nonzero entries of $U_1^T$ coincide with positive entries of
$S_\M$.

Clearly, $SU=S_\M(U_0-U_1)=S_\M U_\M $ (after a possible renaming of
the free variables in $U_\M $).

Let us now look at the general case, where some of the columns
do not have reverses in $S$.
We `expand' $S$ to
$
\tilde S=\left[\begin{array}{cc} S_\M & -S_\M\end{array}\right]
$
by adding reverses of nonreversible columns.
We insert rows of zeros at the appropriate places in $U$.
Again, we write
$$
\tilde U=\left[\begin{array}{c} U_0\\U_1 \end{array}\right].
$$
As before, nonzero entries of $U_0^T$ correspond to negative
entries of $S_\M$.
(Nonzero entries of $U_1^T$ correspond to a subset of the set of
all positive entries of $S_\M$.)
As $U_0-U_1=U_\M $, this concludes the proof.
\end{proof}

\subsection{Counting anomalous signs
when $S_\M$ is square}

Now we give our main theorem for square reduced $S$-matrices.
The result is strong and effectively reduces the problem
to the matrix and graph-theoretic test of \S
\ref{subsec:graphicalTestNonSquare}.

\begin{theorem}\label{thm:1cycle}
Let $S$ be a real $d\times d'$ matrix and
suppose $S_\M$ is a generic square invertible matrix.
Then:
\ben
\item[\rm (1)]
The number of terms in the core determinant $\cd(S)$ equals the
number of terms
in the determinant expansion of $U_\M $.
\item[\rm (2)]
The number of anomalous
signs of the core determinant $\cd(S)$ is the number
of anomalous
signs in the determinant expansion of $U_\M $.
\een
\end{theorem}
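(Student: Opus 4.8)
The plan is to reduce the statement about $\cd(S)$ to a purely combinatorial statement about $\det(U_\M)$ via Lemma~\ref{lem:key} and Lemma~\ref{lem:rankdet}. First I would invoke Lemma~\ref{lem:key} to replace $S$ and $U$ by the reduced pair $S_\M$ and $U_\M$, so that $SU = S_\M U_\M$ after renaming variables; this is harmless because $\cd(S)$ is computed from the product $SU$. Since by hypothesis $S_\M$ is square, invertible and generic, Lemma~\ref{lem:genericRank} tells us $r = \rank(S) = \rank(S_\M) = d$ (where $d$ is now the common side length of the square matrix $S_\M$), so $S_\M U_\M$ is an honest square product of a scalar matrix with a symbolic one.

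Next I would apply Lemma~\ref{lem:rankdet} with this reduced data. Because $r = d$, the sum in \eqref{eq:rankdet} collapses: the only admissible index sets are $\alpha = \beta = \{1,\ldots,d\}$, and the factor $(-1)^{d-r}$ becomes $1$. Hence
$$
\cd(S) = \det(S_\M(\all|\all)) \cdot \det(U_\M(\all|\all)) = \det(S_\M)\,\det(U_\M).
$$
Because $S_\M$ is a fixed \emph{real} invertible matrix, $\det(S_\M)$ is a nonzero scalar constant, carrying no symbolic variables. Therefore $\cd(S)$ is, up to multiplication by this nonzero constant, exactly the polynomial $\det(U_\M)$ in the free variables $(U_\M)_{ij}$.

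From here both claims follow essentially by inspection. For (1), multiplication of a polynomial by a nonzero scalar permutes no monomials and cancels none, so the number of distinct terms in $\cd(S)$ equals the number of terms in the determinant expansion of $U_\M$. For (2), if $\det(S_\M) > 0$ the sign of every term is preserved, so $m_\pm(\cd(S)) = m_\pm(\det U_\M)$; if $\det(S_\M) < 0$ every sign is flipped, so $m_+$ and $m_-$ swap. Either way $m(\cd(S)) = \min\{m_+,m_-\}$ is unchanged, which is exactly the statement about anomalous signs.

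The main obstacle is not any one calculation but making sure the genericity hypothesis is used correctly to force $r = d$ and to guarantee $\det(S_\M) \neq 0$: the key point is that $S_\M$ being square, invertible and generic means the rank is full and the scalar prefactor is genuinely nonzero, so no accidental cancellation among the symbolic terms of $\det(U_\M)$ can be hidden by the scalar. I would also want to confirm that the variable renaming promised by Lemma~\ref{lem:key} is a genuine bijection on the free variables, so that ``number of terms'' and ``number of anomalous signs'' are transported faithfully between $\cd(S)$ and $\det(U_\M)$.
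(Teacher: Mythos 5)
Your proposal is correct and matches the paper's argument: the paper proves this theorem with the single line ``Follows immediately from Lemma \ref{lem:key},'' i.e., exactly the factorization $\cd(S)=\det(SU)=\det(S_\M U_\M)=\det(S_\M)\,\det(U_\M)$ with $\det(S_\M)$ a nonzero real scalar, which you spell out (your detour through the Binet--Cauchy expansion of Lemma \ref{lem:rankdet} just collapses to this when $r=d$). Your closing observations are sound but note that square invertibility alone already gives $\det(S_\M)\neq 0$ and $r=d$; genericity plays no essential role in this square case.
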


\begin{rem}\label{rem:pm-count}
Note that the theorem gives a count of positive
and negative terms in $\cd(S)$ when combined with Theorem
\ref{thm:count}.
The number of (anomalous) signs
in the determinant expansion of $U_\M $ is bounded
above by the number of (anomalous) signs
in the determinant expansion of the sign pattern of
$S_\M$.
\qed
\end{rem}

\begin{proof}[Proof of Theorem {\rm \ref{thm:1cycle}}]
Follows immediately from Lemma \ref{lem:key}.
\end{proof}

\subsection{Rectangular $S_\M$ matrices}

This section gives results and examples for the case of
rectangular reduced $S$-matrices. Our theorem for
complicated situations would not easily yield the precise count.
On the other hand, it yields estimates and in various simple
cases it is effective.

We can use Lemma \ref{lem:key} to
provide a Binet-Cauchy expansion with fewer terms than
there were in
Lemma \ref{lem:rankdet}, namely:

\begin{lemma}
\label{lem:rankdet2}
For $S\in\RR^{d\times d'}$ having rank $r$,
the core determinant is given by
\beq
\label{eq:rankdet2}
 \cd(S)  = (-1)^{d-r}
\sum_{|\al|,|\beta|= r } \det (S_\M (\al|\beta)) \  \det (U_\M (\beta|\al)).
\eeq
\end{lemma}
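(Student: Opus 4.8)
The plan is to combine the two ingredients already at hand: the Binet-Cauchy expansion of the core determinant from Lemma \ref{lem:rankdet}, which gives
\[
\cd(S) = (-1)^{d-r} \sum_{|\al|,|\beta|=r} \det(S(\al|\beta))\,\det(U(\beta|\al)),
\]
and the key identity $SU = S_\M U_\M$ from Lemma \ref{lem:key}. The cleanest route is to avoid re-deriving a Binet-Cauchy expansion from scratch and instead observe that the core determinant, by Proposition \ref{prop:BinetCauhr-ranks}, depends only on the product matrix $SU$ (it is the determinant of the compression of $SU$ to $\ran S$, equivalently the limit $\lim_{t\to 0} t^{-(d-r)}\det(SU - tI)$). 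First I would note that $S$ and $S_\M$ have the same range: since $S_\M$ is obtained from $S$ by deleting, from each reversible pair $\{c,-c\}$, the redundant column $-c$, no column space is lost, so $\ran S = \ran S_\M$ and in particular $S_\M$ has the same rank $r$ as $S$.

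Next I would invoke Lemma \ref{lem:key} to write $SU = S_\M U_\M$. Because the core determinant $\cd(\,\cdot\,)$ is a function of the product matrix alone—this is the content of Proposition \ref{prop:BinetCauhr-ranks}, since $\alpha$ there is defined purely in terms of $CD$ and $\ran C$—we may apply Lemma \ref{lem:rankdet} directly to the pair $(S_\M, U_\M)$ in place of $(S,U)$. Formally, Lemma \ref{lem:rankdet} is proved for an arbitrary matrix of rank $r$ paired with a matrix of symbolic entries of the right shape; applying that lemma verbatim with $S_\M$ as the rank-$r$ matrix and $U_\M$ as its companion yields
\[
\lim_{t\to 0}\frac{1}{t^{d-r}}\det(S_\M U_\M - tI) = (-1)^{d-r}\sum_{|\al|,|\beta|=r}\det(S_\M(\al|\beta))\,\det(U_\M(\beta|\al)).
\]
Since $S_\M U_\M = SU$, the left-hand side is exactly $\cd(S)$, and the displayed right-hand side is precisely \eqref{eq:rankdet2}.

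The step needing the most care—the main obstacle—is confirming that Lemma \ref{lem:rankdet} genuinely applies to $(S_\M, U_\M)$ as stated, rather than only to the specific pair $(S,U)$ for which it was written. The proof of Lemma \ref{lem:rankdet} used only that $S$ had rank $r$ and that $U$ carried symbolic (free-variable) entries, so no cancellation occurs across terms of a fixed degree in $t$; both hypotheses hold for $S_\M$ (rank $r$, established above) and $U_\M$ (whose entries are free variables up to renaming, per Lemma \ref{lem:key}). One subtlety worth flagging explicitly is that the renaming of free variables permitted in Lemma \ref{lem:key} does not alter any of the coefficients $\det(S_\M(\al|\beta))$ nor the presence or sign of the monomials $\det(U_\M(\beta|\al))$; it merely relabels the indeterminates, so the count and signs of terms are untouched. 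With this verified, the identity \eqref{eq:rankdet2} follows, giving a Binet-Cauchy sum whose $S$-minors are drawn from the smaller matrix $S_\M$ and hence has no contributions from redundant reversible columns.
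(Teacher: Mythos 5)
Your proof is correct and follows essentially the same route as the paper, which states Lemma \ref{lem:rankdet2} with no separate proof precisely because it is immediate from combining Lemma \ref{lem:key} ($SU=S_\M U_\M$ after renaming free variables) with the argument of Lemma \ref{lem:rankdet} applied to the pair $(S_\M,U_\M)$. Your extra checks---that $\ran S_\M=\ran S$ so the rank $r$ and row count $d$ are unchanged, and that renaming indeterminates affects neither the minors $\det(S_\M(\al|\beta))$ nor the monomials in $\det(U_\M(\beta|\al))$---are exactly the details the paper leaves implicit.
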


Theorem \ref{thm:1cycle} and Remark \ref{rem:pm-count}
tell us how to count
the number of positive, negative or
anomalous signs in $\cd(S)$ with generic $S_\M$. By the
Binet-Cauchy formula \eqref{eq:rankdet2} given in Lemma
\ref{lem:rankdet2} we count the number of positive and negative
terms for each of the
$\det (U_\M (\beta|\al))$ and take into account
the sign of $\det (S_\M (\al|\beta))$.
The sum of these will give us a count
for the number
of positive and negative terms in $\cd(S)$.
Note: due to the freeness of entries of $U$, there is
no cancellation between the summands.
In particular, this count gives us a lower
bound and upper bound on the number of anomalous signs in $\cd(S)$.\label{blah}

\begin{theorem}
\label{thm:lowBound}
Suppose $S\in\RR^{d\times d'}$ has rank $r$.
Let $S_\M$ be a reduced $S$-matrix and
$U_\M $ the reduced flux pattern.
Suppose that $S_\M$ is generic.
\ben
\item[\rm (1)]
The number of anomalous signs in $\cd(S)$
is at least
$$
\sum_{|\al|,|\beta|= r } m (U_\M  (\beta|\al))
$$
and at most
$$
\sum_{|\al|,|\beta|= r } t(U_\M  (\beta|\al))
 - m (U_\M  (\beta|\al))
.
$$
\item[\rm (2)] The number of terms of sign
$(-1)^{d-1}$ in $\cd(S)$ is
at least
$$
\sum_{S_\M(\al |\beta) \in\cN } m (U_\M  (\beta|\al))
$$
and at most
$$
\sum_{S_\M(\al |\beta)\in\cN } t(U_\M  (\beta|\al))
 - m (U_\M  (\beta|\al))
,
$$
where $\cN$ is the set of all $r\times r$ submatrices
$S_\M$ that are not SD.\een
\end{theorem}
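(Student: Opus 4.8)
The plan is to read everything off the reduced Binet--Cauchy expansion of Lemma~\ref{lem:rankdet2}, namely $\cd(S)=(-1)^{d-r}\sum_{|\al|,|\beta|=r}\det(S_\M(\al|\beta))\,\det(U_\M(\beta|\al))$, by tracking the sign of each resulting monomial one pair $(\al,\beta)$ at a time. First I would record the crucial no-cancellation fact (asserted just before the theorem): a nonzero term of $\det(U_\M(\beta|\al))$ is $\pm$ a squarefree monomial in the free variables $U_{ij}$ whose set of row indices recovers $\beta$ and whose set of column indices recovers $\al$. Hence every monomial occurring in $\cd(S)$ determines the pair $(\al,\beta)$ and the matching that produced it uniquely, so the counts $m_\pm(\cd(S))$ split as honest sums over $(\al,\beta)$. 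The sign of such a monomial in $\cd(S)$ is $\epsilon_{\al\beta}$ times its sign inside $\det(U_\M(\beta|\al))$, where $\epsilon_{\al\beta}:=(-1)^{d-r}\sign\det(S_\M(\al|\beta))$.

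Next I would settle the genericity bookkeeping that makes $\epsilon_{\al\beta}$ meaningful. Since $U_\M$ is built from the sign pattern of $-S_\M^T$ (after deleting some entries), the support of $U_\M(\beta|\al)$ is contained in that of $-(S_\M(\al|\beta))^T$; so a perfect matching of $U_\M(\beta|\al)$ transfers to one of $S_\M(\al|\beta)$, and by weak genericity of the $r\times r$ submatrices together with Lemma~\ref{lem:crucial} this forces $\det(S_\M(\al|\beta))\neq0$. Thus whenever a pair contributes a nonzero block to $\cd(S)$ the sign $\epsilon_{\al\beta}$ is well defined, and that block lands in $\cd(S)$ either sign-preserved ($\epsilon_{\al\beta}=+1$) or sign-reversed ($\epsilon_{\al\beta}=-1$). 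In either case the pair contributes the unordered pair $\{m_+,m_-\}(U_\M(\beta|\al))$ to the $(+,-)$ counts of $\cd(S)$, so its minimal side is $m(U_\M(\beta|\al))$ and its maximal side is $t(U_\M(\beta|\al))-m(U_\M(\beta|\al))$; non-contributing pairs have $t=m=0$ and may be included harmlessly. Part (1) is then the elementary inequality $\sum_p\min(a_p,b_p)\le\min(\sum_p a_p,\sum_p b_p)\le\sum_p\max(a_p,b_p)$ applied to $m(\cd(S))=\min(m_+(\cd(S)),m_-(\cd(S)))$.

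For part (2) the extra ingredient is a sign identity isolating the preferred sign $(-1)^{d}$. I would show that if $S_\M(\al|\beta)$ is sign definite, then every surviving term of $\det(U_\M(\beta|\al))$ carries the same sign and every resulting monomial of $\cd(S)$ has sign $(-1)^{d}$; consequently such a pair contributes no term of sign $(-1)^{d-1}$ and has $m(U_\M(\beta|\al))=0$. The mechanism is that the matching $\tau$ producing a term of $\det(U_\M(\beta|\al))$ corresponds to the inverse matching $\tau^{-1}$ in the sign expansion of $S_\M(\al|\beta)$; since $\sign(\tau)=\sign(\tau^{-1})$ and each entry of $U_\M$ is the sign-negative of the transposed entry of $S_\M$, the $\tau$-term sign equals $(-1)^{r}$ times the $\tau^{-1}$-term sign. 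When $S_\M(\al|\beta)$ is SD these latter signs all agree with $\sign\det(S_\M(\al|\beta))$, and the factors $(-1)^{d-r}$, $(-1)^r$ and two equal signs multiply to $(-1)^d$. Hence the terms of sign $(-1)^{d-1}$ in $\cd(S)$ come only from pairs with $S_\M(\al|\beta)\in\cN$, and for each such pair the dichotomy of the previous paragraph shows their number is either $m_+$ or $m_-$ of $U_\M(\beta|\al)$, hence between $m(U_\M(\beta|\al))$ and $t(U_\M(\beta|\al))-m(U_\M(\beta|\al))$; summing over $\cN$ gives the stated bounds.

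The main obstacle I expect is the sign identity of part (2) in the genuinely rectangular, not-all-reversible case: deleting the positive entries of $S_\M$ in nonreversible columns removes some matchings of $U_\M(\beta|\al)$ relative to $-(S_\M(\al|\beta))^T$, so I must verify that the surviving matchings still correspond to genuine nonzero terms of the sign expansion of $S_\M(\al|\beta)$ (so sign-definiteness still pins them to the common sign $\sign\det(S_\M(\al|\beta))$), and that the parity factor $(-1)^r$ is computed correctly from the per-edge sign reversal across the $r$ edges of the matching. The no-cancellation claim, although intuitively clear from the freeness of the $U_{ij}$, also deserves a careful statement, since it is exactly what lets $m_\pm(\cd(S))$ be evaluated as plain sums of the per-pair contributions rather than as counts that could interfere across pairs.
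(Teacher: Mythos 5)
Your proposal is correct and follows essentially the same route as the paper: it reads the counts off the reduced Binet--Cauchy expansion of Lemma~\ref{lem:rankdet2}, uses the freeness of the $U_{ij}$ to rule out cancellation across pairs $(\al,\beta)$, and for part (2) uses the same sign identity (that $U_\M(\beta|\al)$ is the sign pattern of $-S_\M(\al|\beta)^T$ with some entries zeroed, so an SD submatrix forces $\sign\det(U_0)\in\{0,(-1)^r\sign\det(S_0)\}$ and contributes only terms of sign $(-1)^d$). Your write-up is in fact more careful than the paper's terse proof on two points it leaves implicit --- the genericity argument guaranteeing $\det(S_\M(\al|\beta))\neq 0$ whenever $U_\M(\beta|\al)$ has a matching, and the $\tau$ versus $\tau^{-1}$ bookkeeping behind the $(-1)^r$ factor --- and both are handled correctly.
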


\begin{proof}
(1) follows from the explanation given above, so we consider
(2). For a SD matrix $S_0=S_\M(\al|\beta)$, all terms in the
determinant expansion of the sign pattern of $S_0$ have
the same sign. Hence the same holds true for $U_0=U_\M(\beta|\al)$
which is the sign pattern of $-S_0^T$ with possibly some
entries set to $0$. If $|\al|=|\beta|=r$, then the sign
of $\det(U_0)$ is $0$ or $(-1)^r$ times the sign of $\det(S_0)$. Hence by Lemma
\ref{lem:rankdet2}, a term of sign $(-1)^{d-1}$
in $\cd(S)$ cannot come from a $r\times r$ SD submatrix of
$S_\M$.
To conclude the proof, note that given
$S_i=S_\M(\al|\beta) \in\cN$, the term
$\det (S_\M(\al|\beta))\, \det( U_\M(\beta|\al))$ will contribute
at least $\min\{m_-(U_i),m_+(U_i)\}=m(U_i)$ terms of
sign $(-1)^{d-1}$ in $\cd(S)$ and at most
$\max\{m_-(U_i),m_+(U_i)\}=t(U_i)-m(U_i)$ terms of
sign $(-1)^{d-1}$. (Here $U_i:= U_\M(\beta|\al)$.)
\end{proof}

These bounds are often tight as the next examples illustrate.

\subsubsection{\bf Examples}

\begin{exam}
\label{ex:recipe0}
Let $S$ be a $d\times d'$ matrix of rank $r$ with generic $S_\M$.\\
{\it Suppose $S_\M$ has no e-cycle interlacing with respect to a
perfect matching,
then $\cd(S)$ has no
anomalous signs.}\\
 This we now demonstrate.
 By assumption and Theorem \ref{thm:count},
any $r\times r$ submatrix
$S_0 = S_\M(\alpha|\beta)$ of $S_\M$ is SD. Then $\cN=\varnothing$,
so by Theorem \ref{thm:lowBound}, $\cd(S)$ will have no anomalous
signs.

Conversely, {\it if $\cd(S)$ has no anomalous signs,
then $G(U_\M )$ has no
e-cycles interlacing with respect to a perfect matching.}\\
To see why this is true, we invoke Theorem \ref{thm:count}.
Such an e-cycle and the perfect matching in $G(U_\M )$ pick out
a $r\times r$ submatrix $U_\M (\beta|\alpha)$ of $U_\M $. The corresponding
summand in \ref{eq:rankdet2} will then yield at least one
anomalous sign by Theorem \ref{thm:count}.

In the fully reversible case this yields a necessary and sufficient
condition for $\cd(S)$ to have no anomalous signs.
\qed
\end{exam}

\begin{rem}
We recall that for $\cfd(S)$ what we have
just done is known \cite{CF06} in the fully
reversible case $S=\left[\begin{array}{cc} S_\M & -S_\M\end{array}\right]$. What is shown in \cite{CF06}, implies that $\cfd(S)$ has no
anomalous signs iff $G(S_\M)$ has no e-cycles.
\qed
\end{rem}

\begin{exam}
\label{ex:tail}
Suppose 
$S_\M$ is generic,
$S=\left[\begin{array}{cc}S_\M&-S_\M\end{array}\right]$
and the graph $G(S_\M)$ is:
$$
\xymatrix{
\begin{xy} *+{\txt{R1} }* \frm{o}
\end{xy}  \ar@{{}--{}}[d] \ar@{{}-{}}[r] & \fbox{C1}
\ar@{{}--{}}[d]
\\ \fbox{C2} \ar@{{}-{}}[r] & \begin{xy}
*+{\txt{R2}}* \frm{o} \end{xy}\ar@{{}-{}}[r]
& \fbox{C3} \ar@{{}--{}}[r] & \begin{xy}
*+{\txt{R3}}* \frm{o} \end{xy}\ar@{{}-{}}[r]
& {\cdots}\ar@{{}-{}}[r]  &
 \fbox{C$n$} \ar@{{}--{}}[r] & \begin{xy}
*+{\txt{R$n$}}* \frm{o} \end{xy}
}
$$

\noindent
There are $n$ rows and $n$ columns, so
the rank of $S$ (and $S_\M$) is $n$.
$G(S_\M)$ supports exactly one rank $n$ square matrix, $S_\M$
itself.

$G(S_\M)$ has one cycle with no c-pairs,
so it is an e-cycle.
$G(S_\M)$ admits 2 perfect
matchings:  the e-cycle interlaces both matchings.
So by Theorem \ref{thm:count}, we get  $\cN$ is $S_\M$.

Theorem \ref{thm:lowBound} together with
 Theorem \ref{thm:count} imply
$$
1 = m(U_\M) = \sum_{ U_i\in\cN } m (U_i)
\leq 
m(S)   \leq 
\sum_{ U_i\in\cN }
[ t(U_i) - m(U_i) ]= t(U_\M) - m(U_\M) =2 - 1
 = 1.
$$
Thus generically $\cd(S)$ has one anomalous sign (independent of $n\geq 2$).

Alternative to Theorem \ref{thm:lowBound},
since $S_\M$ is square, we could have used
Theorems \ref{thm:1cycle} and \ref{thm:count} which
tell us  that $\cd(S)$ will have 2 terms,
one with a positive and one with a negative sign.

On the other hand, the number of anomalous signs in $\cfd(S)$
increases rapidly with $n$.
$$
\begin{array}{c|c}
& \# \text{ of anomalous} \\
n & \text{signs in }\cfd(S)
\\
\hline
2&1 \\
3&2 \\
4&5 \\
5&13 \\
6&34 \\
7&89 \\
8&233 \\
9&610\\
10&1597

\end{array}
$$
This data is consistent with
$$
\text{number of anomalous signs}= {\rm Fib}(2n-3)
$$
(see the website \url{http://www.research.att.com/~njas/sequences/}).
We leave it to the interested reader to see if this is
true.
\qed

\end{exam}

\begin{exam}
Suppose
$S=\left[\begin{array}{cc}S_\M&-S_\M\end{array}\right]$,
$S_\M$ is generic
and $G(S_\M)$ is the  graph:
$$
\xymatrix{
&
\begin{xy} *+{\txt{R8} }* \frm{o}
\end{xy}  \ar@{{}--{}}[d]
\\
\begin{xy} *+{\txt{R7} }* \frm{o}
\end{xy}  \ar@{{}--{}}[r] &
\fbox{C4}
\ar@{{}-{}}[r]
\ar@{{}--{}}[d] &
\begin{xy} *+{\txt{R2} }* \frm{o}
\end{xy}  \ar@{{}--{}}[d] \ar@{{}-{}}[r] & \fbox{C1}
\ar@{{}--{}}[d] &
\begin{xy} *+{\txt{R3} }* \frm{o}
\end{xy}  \ar@{{}--{}}[d]
\\
& \begin{xy} *+{\txt{R6} }* \frm{o}
\end{xy}
&
\fbox{C2} \ar@{{}-{}}[r] & \begin{xy}
*+{\txt{R1}}* \frm{o} \end{xy}\ar@{{}-{}}[r]
& \fbox{C3} \ar@{{}--{}}[r] \ar@{{}--{}}[d]
 & \begin{xy}
*+{\txt{R4}}* \frm{o} \end{xy}
\\
&&&&  \begin{xy}
*+{\txt{R5}}* \frm{o} \end{xy}
}
$$
There are 8 rows and 4 columns,
so
rank of $S$ and $S_\M$ is $4$. \
One e-cycle.

$G(S_\M)$ admits $3\cdot 3\cdot 2=18$
perfect matchings and the
e-cycle
is interlacing with respect to every one of those.
Each perfect matching selects a
$4\times 4$ submatrix of $S_\M$
(or $U_\M $) with one e-cycle in its graph.
In total there are 9 such submatrices
(each in $\cN$) with the
graph of each one admitting two perfect matchings.

Theorems \ref{thm:lowBound} plus \ref{thm:count} imply
$$ 
9 = \sum_{ U_i\in\cN } m (U_i)
  \leq 
m(S)    \leq  \sum_{ U_i\in\cN } [ t(U_i) - m(U_i) ]
=  9( 2 - 1) = 9.
$$
Thus generically $\cd(S)$ has $9$ anomalous signs.
\qed
\end{exam}

\subsection{Few Anomalous Signs - An Algorithm}
We have just looked at bounds for the number of anomalous
signs in $\cd(S)$ for generic $S_\M$.
A small number of anomalous signs in the core determinant can be
handled \emph{precisely} using an algorithm we now describe which
obtains
{\it
necessary and sufficient conditions for $\cd(S)$
to have $($zero or$)$ one anomalous sign.}

\subsubsection{\bf The zero-one anomalous sign algorithm:}
\label{1bsa}
Suppose $S$ is a $d\times d'$ matrix of rank $r$.
In order for the algorithm to work with certainty,
we assume $S_\M$ is generic.
Let $\cN$ be the set of all $r\times r$ submatrices of $S_\M$
that are not SD.
Given $S_i\in\cN$ we use $U_i$ to denote the corresponding
submatrix of $U_\M $.
We present the algorithm only for the case when $\cd(S)$ has
no anomalous signs
or the anomalous sign is $(-1)^{d-1}$.
\footnote{This assumption is made purely for convenience of exposition.
In fact, if $S_\M$ has at least two SNS $r\times r$ submatrices
with nonsingular corresponding submatrices in $U_\M $, then this will
automatically be the case.
}

\ben
\item[Case E:] {\it $\cN$ has $0$ elements.}\\
Then $\cd(S)$ has
no anomalous signs.
\item[Case N:] {\it $\cN$ is nonempty}.
\ben
\item[Subcase (a):] {\it All the $U_i$ corresponding to $S_i\in\cN$
are SD.}\\
Take $\det(S_i)\, \det(U_i)$ and look at its sign.
If for all $S_i\in\cN$ this sign is $(-1)^r$,
then $\cd(S)$ has no anomalous signs.
Otherwise for some $S_i\in\cN$ the sign is $(-1)^{r-1}$ and
the corresponding term $\det(S_i)\, \det(U_i)$ contributes
$t(U_i)$ terms with sign $(-1)^{r-1}$
to $\cd(S)$.
If $t(U_i)>1$, then there is more than one anomalous sign in $\cd(S)$.
If there is $S_j\ne S_i$ with $\sign (\det(S_j)\, \det(U_j))=(-1)^{r-1}$,
then $\cd(S)$ will have more than one anomalous sign.
Otherwise $\cd(S)$ has one anomalous sign.

\item[Subcase (b):] {\it There is exactly one $S_0\in\cN$ for which the corresponding $U_0$
is not SD.}\\
If there is $S_i\in\cN\setminus\{S_0\}$ with
the sign of $\det(S_i)\,\det(U_i)$ equal to $(-1)^r$, then $\cd(S)$ will have more than one
anomalous sign.
Otherwise we
use the det sign test (Theorem \ref{thm:count}) to compute $m(U_0)$.
\ben
\item[(i)]
If $m(U_0)>1$, then $\cd(S)$ will have more than one anomalous sign.
\item[(ii)]
Suppose $m(U_0)=1$.
If the number of terms $t$ in $\det(U_0)$ is two,
$\cd(S)$ will have one anomalous sign. So
suppose $t>2$.
Let
$$
\epsilon= \left\{ \begin{array}{lcl}
+1 &\mid& m(U_0)=m_+(U_0)\\
-1 &\mid& \text{otherwise}.
\end{array}\right.
$$
Now $\cd(S)$ will have one anomalous sign
iff
\begin{equation}\label{eq:wellBehaved}
\epsilon \sign \det(S_0) =(-1)^{r-1}.
\end{equation}
(If \eqref{eq:wellBehaved} fails, $\cd(S)$ will have more than one anomalous sign.)
\een

\item[Subcase (c):] {\it There are at least two $S_i\in\cN$ for which the corresponding $U_i$
is not SD.}\\
In this case $\cd(S)$ will have at least two anomalous signs.
\een
\een

\begin{lemma}
\label{lem:recipe1}
The {\it zero-one anomalous sign algorithm} computes whether or not
there is one $($respectively no$)$ anomalous sign.
\end{lemma}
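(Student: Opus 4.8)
The plan is to verify that the \textit{zero-one anomalous sign algorithm} is correct by following its case structure and cross-checking each branch against the master formula \eqref{eq:rankdet2} of Lemma \ref{lem:rankdet2}, together with the sign-counting result Theorem \ref{thm:count}. The key structural fact I would lean on is that, because the entries of $U$ (and hence of $U_\M$) are \emph{free} variables, the Binet-Cauchy sum
$$
\cd(S) = (-1)^{d-r} \sum_{|\al|,|\beta|=r} \det(S_\M(\al|\beta))\,\det(U_\M(\beta|\al))
$$
exhibits \emph{no cancellation} between summands indexed by distinct pairs $(\al,\beta)$: each $\det(U_\M(\beta|\al))$ is a polynomial in its own block of $U$-variables, and the only overlap between blocks is controlled, so a term of a given sign contributed by one summand cannot be annihilated by another. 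This is exactly the observation made on page \pageref{blah}. Thus counting anomalous signs in $\cd(S)$ reduces to counting, summand by summand, how many terms of sign $(-1)^{d-1}$ each $\det(S_\M(\al|\beta))\,\det(U_\M(\beta|\al))$ produces, and this is precisely what the algorithm tracks.

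With that reduction in hand, I would walk through the branches. First I would record the sign bookkeeping: for an $r\times r$ block, $\det(U_\M(\beta|\al))$ contributes a term of sign $(-1)^{d-1}$ to $\cd(S)$ exactly when that term, multiplied by $\sign\det(S_\M(\al|\beta))$ and the global $(-1)^{d-r}$, lands on $(-1)^{d-1}$; this is the content of the proof of Theorem \ref{thm:lowBound}(2), which already shows SD blocks $S_i\notin\cN$ contribute nothing to the $(-1)^{d-1}$ count. Hence only blocks $S_i\in\cN$ matter, which justifies the dichotomy Case E versus Case N. In Case N, Subcase (a) the blocks $U_i$ are all SD, so each contributes either $0$ or all $t(U_i)$ of its terms with sign $(-1)^{d-1}$, governed by whether $\sign(\det(S_i)\det(U_i))$ is $(-1)^r$ or $(-1)^{r-1}$; the algorithm's tallying — more than one anomalous sign if some $t(U_i)>1$ fires, or if two distinct blocks fire — is then a direct consequence, since no cancellation can reduce the count. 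In Subcase (b) with a single non-SD block $U_0$, I would use Example \ref{ex:1badSign} and Theorem \ref{thm:count} to pin $m(U_0)$, and I would verify that condition \eqref{eq:wellBehaved} correctly selects when the single anomalous term of $U_0$ points in the $(-1)^{d-1}$ direction (so that it \emph{is} the unique anomalous sign) rather than in the majority direction (which would instead make the \emph{majority} sign of $U_0$ anomalous and so force $t-1>1$ anomalous terms whenever $t>2$). Subcase (c) is immediate: two non-SD blocks give at least two anomalous terms that cannot cancel.

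The main obstacle I anticipate is Subcase (b)(ii), specifically getting the parity arithmetic in \eqref{eq:wellBehaved} exactly right and confirming that the $t>2$ hypothesis is genuinely needed. The subtlety is that a non-SD block $U_0$ has $m(U_0)$ terms of its minority sign and $t(U_0)-m(U_0)$ of its majority sign; when $m(U_0)=1$ the block contributes \emph{one} term of one sign and $t-1$ of the other. Whether the solitary minority term or the $t-1$ majority terms end up with global sign $(-1)^{d-1}$ depends on $\sign\det(S_0)$ and on whether the minority sign is $+$ or $-$ (encoded in $\epsilon$). If the $t-1$ majority terms are the ones carrying sign $(-1)^{d-1}$, then $\cd(S)$ has $t-1$ anomalous signs, which exceeds one precisely when $t>2$; if instead the single minority term carries $(-1)^{d-1}$, there is exactly one anomalous sign. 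I would make this explicit by a short sign computation identifying $\epsilon\,\sign\det(S_0)$ with the sign of the minority contribution relative to $(-1)^{d-1}$, thereby deriving \eqref{eq:wellBehaved}, and noting that the edge case $t=2$ (where minority and majority counts are both $1$) always yields one anomalous sign regardless. Collecting the four branches then establishes that the algorithm outputs ``one (respectively no) anomalous sign'' exactly when that is the truth, proving Lemma \ref{lem:recipe1}.
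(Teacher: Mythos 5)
Your proof is correct and takes essentially the same route as the paper's: reduce everything to the Binet--Cauchy expansion \eqref{eq:rankdet2}, use the freeness of the $U_{ij}$ to rule out cancellation between (and within) summands, and check each branch of the algorithm against Theorem \ref{thm:count}, with SD blocks contributing nothing to the $(-1)^{d-1}$ count by the argument in Theorem \ref{thm:lowBound}(2). The paper's proof is simply a terser version of this case check (it dismisses Case N.(b).(ii) as ``essentially contained in the statement''), so your explicit sign bookkeeping deriving \eqref{eq:wellBehaved} and your treatment of the $t=2$ edge case just fill in details the paper leaves implicit.
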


\begin{proof}
Case E is given in Example \ref{ex:recipe0}.
Case N.(a) follows directly from the Binet-Cauchy formula \eqref{eq:rankdet2}.
For Case N.(b).(i), $\det(S_0)\, \det(U_0)$ has more than one anomalous sign,
so $\cd(S)$ will have more than one anomalous sign.
The proof of Case N.(b).(ii) is essentially contained in the statement.
Finally, in the Case N.(c) two different $S_i$ contribute
two different terms to the Binet-Cauchy expansion \ref{eq:rankdet2} for $\cd(S)$
each having at least one anomalous sign.
\end{proof}

\begin{rem}
The algorithm simplifies considerably in the fully reversible case, as then $U_i$
is SD iff $S_i$ is. Thus Case N.(a) cannot arise.
Subcase (b) is equivalent to $\cN$ having exactly one element and
Subcase (c) is equivalent to $\cN$ containing at least two elements.
\qed
\end{rem}

\begin{exam}
Let $S=\left[\begin{array}{cc}S_\M&-S_\M\end{array}\right]$, where
$$
S_\M=\left[\begin{array}{rrr}
a&1&0\\
1&1&1\\
-1&0&1\\
0&1&0
\end{array}\right]$$
has rank $3$
and $a\in\RR_{>0}$. Suppose $ a \neq 2$; this makes $S_\M$ generic. 
The graph $G(S_\M)$ is given by the following:
\[
\xymatrix{
&
\begin{xy} *+{\txt{R1} }* \frm{o}
\end{xy}  \ar@{{}--{}}[d] \ar@{{}--{}}[r] & \fbox{C1}
\ar@{{}--{}}[d]
&
\begin{xy} *+{ \txt{R3} }* \frm{o}
\end{xy} \ar@{{}--{}}[l]
\\ 
\begin{xy} *+{ \txt{R4} }* \frm{o}
\end{xy} \ar@{{}--{}}[r]
& \fbox{C2} \ar@{{}--{}}[r] & \begin{xy}
*+{\txt{R2}}* \frm{o} \end{xy}
&
\fbox{C3}
\ar@{{}--{}}[l] \ar@{{}-{}}[u]
}
 \]

\noindent The cycle R$1-$C$1-$R$2-$C$2-$R$1$ has two c-pairs and is an
e-cycle; it is the only e-cycle and it interlaces two perfect matchings.
Both leave out R$4$ and 
select the same $3\times 3$ submatrix $S_0$ of $S_\M$.
Hence $\cN=\{S_0\}$. 

To count the number of
anomalous signs in $\cd(S)$ apply the Algorithm \ref{1bsa}. Our
situation corresponds to Case N.(b) and we compute $m(U_0)$, where
$U_0$ is the $3\times 3$ submatrix of $U_\M$ corresponding to $S_0$.
By the det sign test, $m(U_0)=1$ and $t(U_0)=3$. It is
easy to see that $m(U_0)=m_-(U_0)$; thus by the zero-one anomalous sign
algorithm, $\cd(S)$ will have one anomalous sign iff $a-2=\det(S_0) <0$.
(If $a>2$, $\cd(S)$ will have two anomalous signs.)
\qed\end{exam}

Another class of examples is cycles with short hair, a notion
we now elucidate.
A subgraph $\Gamma$ of $G$ is said to have {\bf short hair}
provided when edges of $\Gamma$ are removed from $G$ all paths
in the remaining graph starting
from a vertex in $\Gamma$ have length $\leq 1$.

\begin{prop}\label{cor:1cycle}
Suppose $S_\M$ is generic and
the graph $G(S_\M)$ is connected and contains at most one cycle
and possibly some short hair $($e.g.~Example {\rm \ref{ex:CF3}} or Example {\rm \cite[Table 1.1.(iii)]{CF05})}.
Then the number of anomalous signs in $\cd(S)$ is $\leq 1$.
\end{prop}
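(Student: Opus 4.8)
\textbf{Proof plan for Proposition~\ref{cor:1cycle}.}
The plan is to reduce the statement to an application of the zero-one anomalous sign algorithm (\S\ref{1bsa}) combined with Theorem~\ref{thm:count}, by analyzing how the single cycle and the short hair constrain the set $\cN$ of non-SD square submatrices of $S_\M$. First I would invoke Lemma~\ref{lem:genericRank}: since $G(S_\M)$ is connected, the rank $r$ of the generic matrix $S_\M$ equals the smaller of its row or column count, so every $r\times r$ submatrix picked out by a balanced set of cycles (equivalently, by a perfect matching together with the interlacing cycles) is accounted for by the at-most-one cycle present. The governing observation is that anomalous signs in $\cd(S)$ arise, via Theorem~\ref{thm:1cycle} and Lemma~\ref{lem:rankdet2}, only from submatrices $S_i\in\cN$, and by Example~\ref{ex:1badSign} a submatrix contributes anomalies exactly when its graph carries a $\cW$-interlacing e-cycle.

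The key step is to show that the hypothesis forces $\cN$ into the shape handled by Case~E or Subcase~(b) of the algorithm, never Subcase~(c). Here the role of short hair is decisive: when the edges of the unique cycle are deleted, every remaining path starting on the cycle has length $\leq 1$, so the hair vertices are forced into any perfect matching in an essentially unique way (each pendant row or column must be matched along its single available hair edge). Consequently all perfect matchings of $G(S_\M)$ differ only in how they traverse the one cycle, and all of them select the \emph{same} underlying $r\times r$ submatrix when restricted to the rows and columns the cycle and its forced hair-matching touch. I would argue that this yields at most one non-SD submatrix $S_0$: if the unique cycle is an o-cycle, then by Theorem~\ref{thm:count} no interlacing e-cycle exists anywhere, $\cN=\varnothing$, and we are in Case~E; if it is an e-cycle, then $\cN=\{S_0\}$, placing us in Subcase~(b) (or the trivially handled Subcase~(a) when $U_0$ happens to be SD).

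Granting that $|\cN|\leq 1$, the conclusion is immediate from Lemma~\ref{lem:recipe1}: Case~E gives zero anomalous signs, and in Subcase~(b) the single term $\det(S_0)\,\det(U_0)$ is the only source of anomalous signs, with $m(U_0)$ itself bounded by the number of $\cW$-interlacing e-cycles in $G(U_0)$, which is one. Thus $\cd(S)$ has at most one anomalous sign.

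The hard part will be the matching-counting step, namely proving rigorously that the short-hair condition forces every perfect matching to coincide on the hair and hence to select a single submatrix, so that distinct interlacing e-cycles (which would push us into Subcase~(c)) cannot occur. I would handle this by inducting on the hair: a pendant vertex of degree one must be matched to its unique neighbor, deleting both reduces the graph while preserving the ``single cycle plus short hair'' structure, and the length-$\leq 1$ condition guarantees the reduction terminates at the bare cycle without ever creating a second independent cycle or a second perfect-matching choice off the cycle. Once the forced-matching claim is established, the remaining bookkeeping against the cases of Algorithm~\ref{1bsa} is routine.
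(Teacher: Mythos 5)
Your overall strategy is the paper's: reduce to the zero-one anomalous sign algorithm of \S\ref{1bsa}, show the unique cycle $\cE$ forces $|\cN|\leq 1$, and dispose of the single $U_0$ via Theorem \ref{thm:count}. But the combinatorial step you defer as the ``hard part'' is not merely unproved --- as stated it is false, and your planned induction fails. You claim every pendant vertex is matched along its hair edge, hence all perfect matchings agree off the cycle and select one and the same $r\times r$ submatrix, so that an e-cycle yields $\cN=\{S_0\}$. This breaks down exactly when the hair contains both pendant rows and pendant columns, i.e.\ when $r=\min(\text{rows},\text{cols})$ strictly exceeds the number $k$ of columns on $\cE$. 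Concretely: take the cycle R1$-$C1$-$R2$-$C2$-$R1 (so $k=2$), a pendant column C3 attached to R1 and a pendant row R3 attached to C1; then $r=3>2$, and any $3\times 3$ submatrix containing $\cE$ must include C3 and R3, whereupon C3 is forced onto R1 and R3 onto C1, so the matching does not traverse the cycle at all and $\cE$ interlaces \emph{no} perfect matching. Thus every $r\times r$ submatrix is SD and $\cN=\varnothing$ even when $\cE$ is an e-cycle --- not $\cN=\{S_0\}$ as you assert --- and your reduction does not ``terminate at the bare cycle'': deleting the forced pair (C3, R1) removes a cycle vertex and destroys the cycle. The paper's proof handles precisely this with a dichotomy you are missing: if $r$ exceeds the number of cycle columns, pendant columns inside any candidate submatrix must consume cycle rows, so $\cE$ interlaces nothing and we are in Case E; only if $r$ equals that number is there a (then unique) $r\times r$ submatrix containing $\cE$, giving $\cN\subseteq\{S_0\}$. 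Your final bound happens to survive because $\varnothing$ also satisfies $|\cN|\leq 1$, but your route to it, including the Subcase (b) analysis premised on an interlacing e-cycle in $G(U_0)$, does not go through in this case.

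Two smaller points. First, your bound ``$m(U_0)$ is at most the number of $\cW$-interlacing e-cycles, which is one'' is false in general: by Corollary \ref{cor:count}(2), a single e-cycle together with disjoint interlacing o-cycles produces $2^{t-1}$ anomalous signs. It is correct here only because $G(S_0)$ is \emph{exactly} the cycle (a chord would be a second cycle, and hair edges leave the rows and columns of $S_0$), so no disjoint interlacing companions exist and Theorem \ref{thm:count} gives $t(U_0)\leq 2$, $m(U_0)\leq 1$; the paper makes this the centerpiece ($U_0$ is SD or has two terms of opposite sign). Second, your ``trivially handled'' Subcase (a) also needs this observation: in Case N.(a) a wrongly signed SD term contributes $t(U_0)$ anomalous signs, which is harmless only because $G(U_0)\subseteq\cE$ forces $t(U_0)\leq 1$ when $U_0$ is SD. With the $r$-versus-$k$ dichotomy restored and these two facts made explicit, your argument closes and essentially coincides with the paper's proof.
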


\begin{proof}
Without loss of generality, $G(S_\M)$ contains a cycle $\cE$.
If $\cE$ is not an e-cycle, then
we are in Case E of the algorithm and there are no
anomalous signs in $\cd(S)$. Thus we assume that
$\cE$ is an e-cycle.

By Lemma \ref{lem:genericRank}, the rank $r$ of $S_\M$ is the minimal number
of rows or of columns in $S_\M$.
If $r$ is bigger than the number of columns appearing in
the cycle, then
we are in Case E of the algorithm because
any perfect matching will include some edge not in the cycle,
thus
making $\cE$ not interlace it. Hence $\cd(S)$ has no anomalous signs.

Otherwise $r$ equals the number of columns appearing in
the cycle.
Then $\cN$ has only one element $S_0$.
The corresponding submatrix $U_0$ of $U_\M $ is either
SD or its determinant expansion has two terms of opposite sign.
Now the result follows
from Case N.(a).
\end{proof}

Note that results on the core determinant $\cd(S)$ given
in this section have parallels for the
Craciun-Feinberg determinant expansion $\cfd(S)$ which
are easy to work out using the techniques in our paper.

\section{Chemical Motivation}
\label{sec:ChemMotiv}
\label{subsec:coreCF}

This matrix theory paper is not directly aimed
at producing chemical results but was inspired as an extension
of the striking work of Craciun and Feinberg.
We hope these extensions might someday prove valuable
on chemical network problems and some methods they combine with
are described in \cite{CHWprept} and a consequence
is Theorem \ref{thm:degr} below.

Now we turn to describing the connection between
the core determinant from
\S \ref{sec:Jac} and chemistry.

A chemical reactor can be thought of as a tank
with each chemical species  flowing in (assume at a constant rate)
and each species flowing out (assume in proportion to
its concentration in the tank).
If the reaction inside the tank satisfies
$  \frac{d x}{dt}= g(x)$,
then when there are inflows and outflows, the
total reaction satisfies
$$ \frac{d x}{dt}= f(x)= g(x) + \eps x_{\rm in}  - \delta x.$$
The Craciun-Feinberg determinant is the determinant  of the Jacobian
$f'$ when $\delta$ is 1 and it bears on counting the number of equilibria
 for this differential equation,
 cf.~\cite{CF05,CF06, CHWprept}.
There is some discussion of small outflows vs.~no
outflows in \cite{CF06iee}.

The core determinant bears on a different problem.
Assume the differential equation has reaction form $f(x)=S v(x)$.
Let $R$ (respectively $R^\perp$)
denote the range of $S$ (respectively its orthogonal complement);
$R$ is typically called the stoichiometric subspace.
Let  $P$ be the projection onto $R$ and $P^\perp$ onto $R^\perp$.
With no inflows and outflows, $P^\perp f(x)=0$ and clearly this
implies
the solution $x(t)$ to the differential equation
propagates on the affine subspace
\beq
\cM_{x^0} := \{ x \mid  P^\perp x(t) = \text{const} = P^\perp x^0 \}
.
\eeq
This reflects  quantities (like the number of carbon atoms)
being  conserved.
The flow on $\cM_{x^0}$ has dynamics
$\frac{ d Px}{dt} = \frac {d(Px + P^\perp x^0)}{dt} = P f(Px + P^\perp x^0)$.
Proposition \ref{prop:BinetCauhr-ranks}
implies that the determinant of the Jacobian of this dynamics
is the core determinant
which we studied in this paper, namely, for any $\xi$ in $\cM_{x^0}$
\beq
\cd(S)(\xi) = \det (P f'(\xi)P).
\eeq

When $\cd(S)$ has no anomalous signs
the degree theory arguments in \S 3 of \cite{CHWprept}
give a strong result for numbers of equilibria of
the differential equation.

 \begin{theorem}
 \label{thm:degr}
Suppose $\frac{dx}{dt}= f_b(x): = Sv^b(x)$ has reaction form
with $v^b(x)$ once continuously differentiable in $x$
and depending
continuously on a parameter $0 \leq b\leq 1$.
Suppose each component $v^b_j(x)$ of $v^b(x)$
 is monotone nondecreasing.
Suppose $\cM_{x^0}$ is compact.
Suppose $\cd(S)$ has no anomalous signs.

If there are  no zeroes $f_b(x)=0$ for any $b$
 and any $x$ on the boundary of $\cM_{x^0}$,
then the number of zeroes for $f_b$ in the interior of
$\cM_{x^0}$
is independent of $b$.
\end{theorem}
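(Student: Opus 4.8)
The plan is to recast the statement as a Brouwer-degree computation for the restriction of $f_b$ to the slice $\cM_{x^0}$ and then to invoke homotopy invariance. First I would fix the stoichiometric subspace $R=\ran S$, write $P$ for the orthogonal projection onto $R$, and identify $\cM_{x^0}$ with a compact subset of the $r$-dimensional affine space $P^\perp x^0 + R$, where $r=\mathrm{rank}(S)$. Let $\Omega$ denote the relative interior of $\cM_{x^0}$, a bounded open subset of that affine space lying in the strictly positive orthant, and define $g_b:=Pf_b$ on $\overline\Omega$, viewed as a map into $R\cong\RR^r$. Since $f_b(x)=Sv^b(x)\in R$ for every $x$, we have $P^\perp f_b\equiv 0$, so on $\cM_{x^0}$ the equation $f_b(x)=0$ is equivalent to $g_b(x)=0$; hence the zeros of $f_b$ in $\cM_{x^0}$ are exactly the zeros of the reduced map $g_b$, and it suffices to show that the number of zeros of $g_b$ in $\Omega$ is independent of $b$.

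Next I would compute the Jacobian of $g_b$ along $\Omega$. For $\xi\in\Omega$ the derivative of $g_b$, as a linear endomorphism of the tangent space $R$, is the compression $Pf_b'(\xi)P|_R$, and by Proposition \ref{prop:BinetCauhr-ranks} its determinant is precisely the core determinant evaluated at $\xi$, i.e.\ $\det\big(Pf_b'(\xi)P|_R\big)=\cd(S)(\xi)$. Because each flux $v^b_j$ is monotone nondecreasing, every entry $V_{ji}(\xi)=\partial v^b_j/\partial x_i(\xi)$ is nonnegative, so $\cd(S)(\xi)$ arises by substituting nonnegative numbers for the free variables $U_{ij}$ in the polynomial $\cd(S)$. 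Since $\cd(S)$ has no anomalous signs, all of its monomials carry one common sign, and therefore $\det\big(Pf_b'(\xi)P|_R\big)=\cd(S)(\xi)$ retains that single sign (after relabeling, $\geq 0$) for all $\xi\in\Omega$ and all $b$.

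With this in hand the degree argument is routine. The boundary hypothesis says $g_b$ has no zero on $\partial\Omega$ for any $b\in[0,1]$, so the Brouwer degree $\deg(g_b,\Omega,0)$ is defined for each $b$; since $g_b$ depends continuously on $b$ and never vanishes on $\partial\Omega$, the family $(g_b)$ is an admissible homotopy and $\deg(g_b,\Omega,0)$ is independent of $b$. This is exactly the degree-theoretic input supplied by \S3 of \cite{CHWprept}. Finally, because the Jacobian determinant $\cd(S)(\xi)$ does not change sign on $\Omega$, every regular zero contributes the same local sign $+1$ to the degree, so the unsigned count of zeros equals $|\deg(g_b,\Omega,0)|$ and is therefore constant in $b$.

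I expect the delicate point to be this last step: passing from the topological degree to the honest count of zeros. The identification is clean only when the zeros are nondegenerate, i.e.\ when $\cd(S)(\xi)\neq 0$ at each zero; under merely monotone \emph{nondecreasing} fluxes some entries $V_{ji}$ may vanish and $\cd(S)(\xi)$ could degenerate to $0$, so one must either upgrade the hypothesis to strictly monotone fluxes (forcing the sign-definite monomials of $\cd(S)$ to be strictly positive in the interior, whence every zero is regular and counts as $+1$) or invoke the sharper degree/count comparison of \cite{CHWprept}, which handles the everywhere-nonnegative-Jacobian case directly. Everything else---the reduction to $g_b$, the Jacobian identification via Proposition \ref{prop:BinetCauhr-ranks}, and the sign-definiteness coming from ``no anomalous signs''---is forced by the structures already established.
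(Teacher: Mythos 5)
Your proof is correct and takes essentially the same route as the paper, which offers no separate argument beyond the reduction of the dynamics to $\cM_{x^0}$, the identification $\cd(S)(\xi)=\det\bigl(Pf_b'(\xi)P\bigr)$ via Proposition \ref{prop:BinetCauhr-ranks}, and a citation of the degree-theoretic arguments of \S 3 of \cite{CHWprept} --- precisely the skeleton you flesh out. The delicate point you flag at the end is in fact already disposed of by the stated hypotheses: reaction form requires $V_{ji}(x)\neq 0$ whenever $S_{ij}<0$, which together with monotone nondecreasing fluxes forces those entries to be strictly positive on the open orthant, so (provided $\cd(S)$ is not the zero polynomial) the core determinant is strictly of one sign at interior points, every zero is nondegenerate, and no upgrade to strictly monotone fluxes is needed.
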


The hypothesis that $\cd(S)$ has no anomalous signs
can be weakened to $\cd(S)(\xi)$ does not equal 0
for  any $\xi$ in $\cM_{x^0}$.

\newpage

\centerline{NOT FOR PUBLICATION}
\today

\tableofcontents

\end{document}